\newtheorem{theorem}{Theorem}[section]
\newtheorem{definition}[theorem]{Definition}
\newtheorem{proposition}[theorem]{Proposition}
\newtheorem{corollary}[theorem]{Corollary}
\newtheorem{remark}[theorem]{Remark}
\newtheorem{example}[theorem]{Example}
\begin{document}

\title{A $q$-deformation of the symplectic Schur functions and the Berele insertion algorithm}
\date{ }
\author{Ioanna Nteka \footnote{E-mail address: \href{mailto:nteka.ioanna@gmail.com}{nteka.ioanna@gmail.com}}}
\maketitle

\begin{abstract}
\noindent A randomisation of the Berele insertion algorithm is proposed, where the insertion of a letter to a symplectic Young tableau leads to a distribution over the set of symplectic Young tableaux. Berele's algorithm provides a bijection between words from an alphabet and a symplectic Young tableau along with a recording oscillating tableau. The randomised version of the algorithm is achieved by introducing a parameter $0< q <1$. The classic Berele algorithm corresponds to letting the parameter $q\to 0$. The new version provides a probabilistic framework that allows to prove Littlewood-type identities for a $q$-deformation of the symplectic Schur functions. These functions correspond to multilevel extensions of the continuous $q$-Hermite polynomials. Finally, we show that when both the original and the $q$-modified insertion algorithms are applied to a random word then the shape of the symplectic Young tableau evolves as a Markov chain on the set of partitions.
\end{abstract}

\section{Introduction}
It is well known (see \cite{Robinson_1938, Scensted_1961, Stanley_2001}) that there is a one-to-one mapping, namely the Robinson-Schensted correspondence, between the set of words of length $m$ in the alphabet $\{1,...,n\}$ and the set of pairs of Young tableaux $(P,Q)$, where $P$ is a semistandard Young tableau with entries from the given alphabet and $Q$ is a standard tableau with entries $\{1,2,...,m\}$. The two tableaux have the same shape which is a partition of the length of the word. If the given word is a permutation of $\{1,...,n\}$, then $P,Q$ are both standard Young tableaux with the same shape. Hence the Robinson-Schensted algorithm provides a bijection between the symmetric group $\mathfrak{S}_{n}$ and pairs of standard Young tableaux with the same shape, which implies the fundamental identity
\[\sum_{\lambda \in \Lambda_{n}}|\{P: \text{standard Young tableau of shape } \lambda\}|^2=n!=|\mathfrak{S}_{n}|\]
where $\Lambda_{n}$ denotes the set of partitions of length at most $n$.\\

Berele in \cite{Berele_1986} proved that similarly to the Robinson-Schensted algorithm, there exists a one-to-one correspondence between words of length $m$ from the alphabet $\{1,\bar{1},...,n, \bar{n}\}$  and the sets of pairs $(P,f)$, where $P$ is a symplectic tableau and $f = (f^0,...,f^m)$ is a recording sequence of oscillating diagrams such that the shape of $P$ is given by $f^m$. The Berele correspondence can be used to prove Littlewood-type identities associated with the symplectic group. For example, Berele's scheme gives a combinatorial proof for the following identity (\cite{Sundaram_1990b})
\begin{equation}
\label{eq:Schur_decomposition}
(a_{1}+a_{1}^{-1}+...+a_{n}+a_{n}^{-1})^m = \sum_{\lambda \in \Lambda_{n}} Q^{\lambda}_{m}(n)Sp_{\lambda}^{(n)}(a_{1},...,a_{n})
\end{equation}
where we denote by $Sp_{\lambda}^{(n)}$ the symplectic Schur function, namely the character of an irreducible finite-dimensional representation of the symplectic group $Sp(2n)$, parametrized by $\lambda$ and we write $Q^{\lambda}_{m}(n)$ for the number of oscillating sequences of diagrams $ f^0=\emptyset ,f^1,...,f^m=\lambda$ with length at most $n$. In the thesis of Sundaram \cite{Sundaram_1986} more results for the symplectic Schur functions are proved using Berele's bijection.

In this paper we will propose a randomisation of Berele's algorithm introducing a tuning parameter $q\in (0,1)$. This will enable us to derive a generalisation of identity \eqref{eq:Schur_decomposition} where the symplectic Schur functions are replaced by a multivariate extension of the continuous $q$-Hermite polynomials (for an extensive review on the properties of the continuous $q$-Hermite polynomials and their connetctions with other basic hypergeometric polynomials we refer the reader to \cite{Koekoek_Lesky_Swarttouw_2010}), which we denote by $P_{\lambda}^{(n)}(a_{1},...,a_{n};q)$. In Corollary \ref{q-Littlewood(corollary)} we present the following identity
\begin{equation*}
(a_{1}+a_{1}^{-1}+...+a_{n}+a_{n}^{-1})^m = \sum_{\lambda\in \Lambda_{n}} Q^{\lambda}_{m}(n;q)P_{\lambda}^{(n)}(a_{1},...,a_{n};q).
\end{equation*}
where  the function $Q^{\lambda}_{m}(n;q)$ is given as the sum over sequences of diagrams $f^0=\emptyset,f^1,...,f^m=\lambda$ with length at most $n$ where each such sequence has a weight that depends on the parameter $q$. As $q\to 0$, the quantity $Q^{\lambda}_{m}(n;q)$ converges to $Q^{\lambda}_{m}(n)$. Later we will also see that the symplectic Schur function $Sp_{\lambda}^{(n)}(a_{1},...,a_{n})$ corresponds to the polynomial $P_{\lambda}^{(n)}(a_{1},...,a_{n};q)$ with $q = 0$, recovering identity \eqref{eq:Schur_decomposition}. We remark that although the left hand sides of identity \eqref{eq:Schur_decomposition} and the $q$-deformed identity are the same and their right hand sides consist of the same number of terms, these terms do not coincide with each other. 

A randomisation of the column insertion version of the Robinson-Schensted algorithm was previously proposed by O'Connell-Pei in \cite{O'Connell_Pei_2013}, where the insertion process is formulated using insertion paths. A randomisation of the Robinson-Schensted algorithm in the framework of Gelfand-Tsetlin patterns can be found in the work of Borodin-Petrov \cite{Borodin_Petrov_2013} and Matveev-Petrov \cite{Matveev_Petrov_2016}. In our work, we follow the latter approach.

If the Robinson-Schensted algorithm is applied to a random word from the alphabet $\{1,...,n\}$ then O'Connell proved in \cite{O'Connell_2003b} that the shape of the Young tableau evolves as a Markov chain on the set of partitions with length at most $n$. The transition probabilities are given in terms of the Schur functions. Similarly, the $q$-weighted version by O'Connell-Pei leads to a random walk on $\Lambda_{n}$ with transition probabilities involving $q$-Whittaker functions instead.  In this paper we will show that if we insert a random word from the alphabet $\{1,\bar{1},...,n,\bar{n}\}$ according to the Berele insertion algorithm or its $q$-deformation proposed, then the shape of the symplectic Young tableau evolves as a Markov chain on $\Lambda_{n}$ with transition probabilities given in terms of the symplectic Schur functions and the functions $P^{(n)}(\cdot\, ;q)$ respectively.

The outline of the paper is the following. In section 2 we introduce all the relevant notation. In section 3 we present the insertion algorithm proposed by Berele and study the evolution of the shape of a Young tableau when we insert randomly chosen letters. In section 4 we describe the algorithm using Gelfand-Tsetlin patterns instead of Young tableau, which helps us to $q$-modify the insertion process in section 5. In section 6 we state the main results obtained using an intertwining argument. In section 7 we study the shape of the symplectic Young tableau when the $q$-deformation of the Berele insertion algorithm is applied to a random word. Finally, the proof of the main intertwining identity can be found in section 8.

\section{Partitions and Young diagrams}
\label{partitions}
A \emph{partition} is a sequence $\lambda = (\lambda_{1},\lambda_{2}, ...)$ of integers satisfying $\lambda_{1}\geq \lambda_{2}\geq ... \geq 0$ with finitely many non-zero terms. We call each term $\lambda_{i}$ a \emph{part} of $\lambda$ and the number of non-zero parts the \emph{length of the partition} $\lambda$, denoted by $l(\lambda)$. Moreover we call $|\lambda|:=\sum_{i\geq 1}\lambda_{i}$, the \emph{weight of the partition} $\lambda$. If $|\lambda|=k$, then we say that $\lambda$ partitions $k$ and we write $\lambda \vdash k$. The set of partitions of length at most $n$ is denoted by $\Lambda_{n}$.

We define the natural ordering on the space of partitions called the \emph{dominance order}. For two partitions $\lambda, \mu$ we write $\lambda \geq \mu$ if and only if
\[\lambda_{1}+...+\lambda_{i}\geq \mu_{1}+...+\mu_{i}, \text{ for all }i\geq 1.\]

We also define the notion of interlacing of partitions. Let $\lambda$ and $\mu$ be two partitions with $\lambda\geq \mu$. The partitions $\lambda$, $\mu$ are said to be \emph{interlaced}, and we write $\mu\preceq \lambda$ if and only if
\[\lambda_{1}\geq \mu_{1}\geq \lambda_{2}\geq \mu_{2}\geq ...\quad  .\]

A partition $\lambda$ can be represented in the plane by an arrangement of boxes called a \emph{Young diagram}. This arrangement is top and left justified with $\lambda_{i}$ boxes in the $i$-th row. We then say that the shape of the diagram, denoted by $sh$, is $\lambda$. For example the partition $\lambda = (3,2,2,1)$ can be represented as in figure \ref{fig:diagram}.

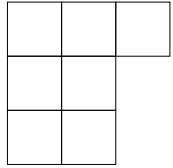
\begin{figure}[h]
\begin{center}
\scalebox{0.9}{
\begin{tikzpicture}
\draw [black] (0,0) rectangle (0.8,0.8);
\draw [black] (0.8,0) rectangle (1.6,0.8);
\draw [black] (1.6,0) rectangle (2.4,0.8);
\draw [black] (0,-0.8) rectangle (0.8,0);
\draw [black] (0.8,-0.8) rectangle (1.6,0);
\draw [black] (0,-1.6) rectangle (0.8,-0.8);
\draw [black] (0.8,-1.6) rectangle (1.6,-0.8);
\end{tikzpicture}
}
\caption{A Young diagram of shape $(3,2,2)$.}
\label{fig:diagram}
\end{center}
\end{figure}

Given two partitions/diagrams $\lambda$ and $\mu$ such that $\mu \subset \lambda$ (as a set of boxes), we call the set difference $\lambda \setminus \mu$ a \emph{skew Young diagram}. A skew Young diagram $\lambda \setminus \mu$ is a horizontal strip if in each column it has at most one box. 

Let us now consider a filling of a Young diagram, namely the symplectic Young tableau, with entries from an alphabet $[n,\bar{n}]:=\{1<\bar{1}<...<n< \bar{n}\}$. For the rest of this paper, we will refer to the relative position of a letter $l$ in the alphabet $[n,\bar{n}]$ as the \emph{order} of the letter and write $order(l)$. 
\begin{definition}[\cite{King_1971}]
\label{symp_tableau_def}
A \emph{symplectic Young tableau} is a filling of a Young diagram with entries from the alphabet $[n,\bar{n}]$ satisfying the following conditions.\\
\textbf{(S1)} The entries are weakly increasing along the rows;\\
\textbf{(S2)} The entries are strictly increasing down the columns;\\
\textbf{(S3)} No entry $<i$ occurs in row $i$, for any i.\\
We denote the set of symplectic tableaux with entries from $[n,\bar{n}]$ by $\mathcal{T}_{[n,\bar{n}]}$.
\end{definition}

\begin{example}The filling of the diagram of shape $(4,3)$
\begin{figure*}[h]
\begin{center}
\scalebox{0.9}{
\begin{tikzpicture}
\draw [black] (0,0) rectangle (0.8,0.8);
\node at (0.4,0.4) {1};
\draw [black] (0.8,0) rectangle (1.6,0.8);
\node at (1.2,0.4) {$\bar{1}$};
\draw [black] (1.6,0) rectangle (2.4,0.8);
\node at (2,0.4) {$\bar{1}$};
\draw [black] (2.4,0) rectangle (3.2,0.8);
\node at (2.8,0.4) {$\bar{2}$};
\draw [black] (0,-0.8) rectangle (0.8,0);
\node at (0.4,-0.4) {2};
\draw [black] (0.8,-0.8) rectangle (1.6,0);
\node at (1.2,-0.4) {2};
\draw [black] (1.6,-0.8) rectangle (2.4,0);
\node at (2,-0.4) {$\bar{2}$};
\end{tikzpicture}
}
\end{center} 
\end{figure*}
\noindent is a symplectic Young tableau, whereas the filling

\begin{figure*}[h]
\begin{center}
\scalebox{0.9}{
\begin{tikzpicture}
\draw [black] (0,0) rectangle (0.8,0.8);
\node at (0.4,0.4) {1};
\draw [black] (0.8,0) rectangle (1.6,0.8);
\node at (1.2,0.4) {$\bar{1}$};
\draw [black] (1.6,0) rectangle (2.4,0.8);
\node at (2,0.4) {$\bar{1}$};
\draw [black] (2.4,0) rectangle (3.2,0.8);
\node at (2.8,0.4) {$\bar{2}$};
\draw [black] (0,-0.8) rectangle (0.8,0);
\node at (0.4,-0.4) {$\bar{1}$};
\draw [black] (0.8,-0.8) rectangle (1.6,0);
\node at (1.2,-0.4) {2};
\draw [black] (1.6,-0.8) rectangle (2.4,0);
\node at (2,-0.4) {$\bar{2}$};
\end{tikzpicture}
}
\end{center} 
\end{figure*}
\noindent is not since there exists an entry $\bar{1}$ at the second row violating condition $\textbf{(S3)}$.
\end{example}

We will now define an oscillating tableau, which plays the role of a recording tableau.
\begin{definition}
An \emph{oscillating tableau} of length $m$ is a sequence of partitions $f=(f^0,...,f^m)$, with $f^0=\emptyset$ such that any two consecutive partitions differ by exactly one box, i.e. for $1\leq i \leq m$, it holds that $f^i \setminus f^{i-1}=(1)$ or $f^{i-1}\setminus f^i = (1)$. We say that an oscillating tableau $f$ of length $m$ has \emph{shape} $\lambda$ if $f^m = \lambda$.
\end{definition}

We will denote the set of oscillating tableaux of length $m$ by $\mathcal{O}_{m}$ and the set of oscillating tableau $f\in \mathcal{O}_{m}$, which moreover satisfy the property that for each $1\leq i \leq m$, $f^i \in \Lambda_{n}$ by $\mathcal{O}_{m}(n)$. Finally, for $\lambda \in \Lambda_{n}$ we write $Q_{m}^{\lambda}(n)$ for the number of oscillating tableau $f\in \mathcal{O}_{m}(n)$ of shape $\lambda$.

\section{The Berele insertion algorithm}
We recall that the Robinson-Schensted algorithm has two versions; the row insertion and the column insertion. The Berele algorithm follows the row insertion version. Before we describe the insertion algorithm we need to introduce a sliding algorithm called the \textit{jeu de taquin} algorithm.
\begin{definition}
A punctured tableau of shape $z$ is a Young diagram of shape $z$ in which every box except one is filled. We will refer to this special box as the empty box. 
\end{definition}
\begin{definition}[jeu de taquin]
Let $T$ be a punctured tableau with $(\alpha, \beta)$-entry, where $\alpha$ denotes the row and $\beta$ the column, $t_{\alpha \beta}$ and with empty box in position $(i,j)$. We consider the transformation $jdt: T \to jdt(T)$ defined as follows
\begin{itemize}
\item if $T$ is an ordinary tableau then $jdt(T)=T$;
\item while $T$ is a punctured tableau
\begin{equation*}
T \rightarrow \left\{ \begin{array}{ll}
 T\text{ switching the empty box and } t_{i,j+1} & \text{, if } t_{i,j+1}<t_{i+1,j}\\
 T\text{ switching the empty box and }t_{i+1,j} & \text{, if }t_{i,j+1}\geq t_{i+1,j}.
\end{array} \right.
\end{equation*}
Here we will use the convention that if the empty box has only one right/down neighbouring box $(\alpha, \beta)$, then 
\[T \rightarrow T \text{ switching the empty box and }t_{\alpha, \beta}.\]
\end{itemize}
\end{definition}

\begin{example} \hspace{5pt }
\begin{figure}[h]
\begin{center}
\scalebox{0.9}{
\begin{tikzpicture}
\node at (-0.7,0) {$T =$ };
\draw [black] (-0.2,0) rectangle (0.6,0.8);
\draw [black] (0.6,0) rectangle (1.4,0.8);
\draw [black] (1.4,0) rectangle (2.2,0.8);
\draw [black] (-0.2,-0.8) rectangle (0.6,0);
\draw [black] (0.6,-0.8) rectangle (1.4,0);
\node at (1,0.4) {1};
\node at (1.8,0.4) {2};
\node at (0.2,-0.4) {2};
\node at (1,-0.4) {2};

\draw [thick] [->] (2.4,0) -- (3.9,0) node[above] {\footnotesize{$\, t_{12}<t_{21}\qquad \qquad \quad$}};

\draw [black] (4.1,0) rectangle (4.9,0.8);
\draw [black] (4.9,0) rectangle (5.7,0.8);
\draw [black] (5.7,0) rectangle (6.5,0.8);
\draw [black] (4.1,-0.8) rectangle (4.9,0);
\draw [black] (4.9,-0.8) rectangle (5.7,0);
\node at (4.5,0.4) {1};
\node at (6.1,0.4) {2};
\node at (4.5,-0.4) {2};
\node at (5.3,-0.4) {2};

\draw [thick] [->] (6.8,0) -- (8.3,0) node[above] {\footnotesize{$\, t_{13}=t_{22}\qquad \qquad\quad $} };

\draw [black] (8.5,0) rectangle (9.3,0.8);
\draw [black] (9.3,0) rectangle (10.1,0.8);
\draw [black] (10.1,0) rectangle (10.9,0.8);
\draw [black] (8.5,-0.8) rectangle (9.3,0);
\node at (8.9,0.4) {1};
\node at (10.5,0.4) {2};
\node at (8.9,-0.4) {2};
\node at (9.7,0.4) {2};

\node at (12,0) {$=jdt(T)$};
\end{tikzpicture}
}
\end{center} 
\end{figure}
\end{example}

We are now ready to describe the Berele insertion algorithm. To insert a letter $i$ from the alphabet $[n, \bar{n}]$  to a symplectic tableau $P$, we begin by trying to place the letter at the end of the first row. If the result is a symplectic tableau we are done. Otherwise, the smallest entry which is larger than $i$ is bumped and we proceed by inserting the bumped letter to the second row and so on. Suppose now that at some instance of the insertion process condition \textbf{(S3)} in Definition \ref{symp_tableau_def} is violated. This means that we tried to insert a letter $l$ to the $l$-th row of the tableau, for some $1\leq l \leq n$, and bumped an $\bar{l}$ letter. Since we cannot insert $\bar{l}$ to the $(l+1)$-th row we erase both $l$ and $\bar{l}$, leaving the box formerly occupied by the $\bar{l}$ as an empty box. We then apply jeu de taquin algorithm.

\begin{example}
Insert $\bar{1}$ to 
\newpage
\begin{figure}[h]
\begin{center}
\scalebox{0.9}{
\begin{tikzpicture}
\node at (-0.5,-0.4) {P = };
\draw [black] (0,0) rectangle (0.8,0.8);
\draw [black] (0.8,0) rectangle (1.6,0.8);
\draw [black] (1.6,0) rectangle (2.4,0.8);
\draw [black] (2.4,0) rectangle (3.2,0.8);
\draw [black] (0,-0.8) rectangle (0.8,0);
\draw [black] (0.8,-0.8) rectangle (1.6,0);
\draw [black] (1.6,-0.8) rectangle (2.4,0);
\draw [black] (0,-1.6) rectangle (0.8,-0.8);
\draw [black] (0.8,-1.6) rectangle (1.6,-0.8);
\node at (0.4,0.4) {1};
\node at (1.2,0.4) {1};
\node at (2,0.4) {2};
\node at (2.8,0.4) {$\bar{2}$};
\node at (0.4,-0.4) {2};
\node at (1.2,-0.4) {$\bar{2}$};
\node at (2,-0.4) {3};
\node at (0.4,-1.2) {3};
\node at (1.2,-1.2) {$\bar{3}$};
\node at (3.6,0.4) {+$\bar{1}$};

\draw [thick] [->] (4,-0.4) -- (5.5,-0.4) node[above] {bump 2 \hspace{40pt} };

\draw [black] (6,0) rectangle (6.8,0.8);
\draw [black] (6.8,0) rectangle (7.6,0.8);
\draw [black] (7.6,0) rectangle (8.4,0.8);
\draw [black] (8.4,0) rectangle (9.2,0.8);
\draw [black] (6,-0.8) rectangle (6.8,0);
\draw [black] (6.8,-0.8) rectangle (7.6,0);
\draw [black] (7.6,-0.8) rectangle (8.4,0);
\draw [black] (6,-1.6) rectangle (6.8,-0.8);
\draw [black] (6.8,-1.6) rectangle (7.6,-0.8);
\node at (6.4,0.4) {1};
\node at (7.2,0.4) {1};
\node at (8,0.4) {$\mathbf{\bar{1}}$};
\node at (8.8,0.4) {$\bar{2}$};
\node at (6.4,-0.4) {2};
\node at (7.2,-0.4) {$\bar{2}$};
\node at (8,-0.4) {3};
\node at (6.4,-1.2) {3};
\node at (7.2,-1.2) {$\bar{3}$};
\node at (9.6,-0.4) {+2};

\draw [thick] [->] (10,-0.4) -- (11.5,-0.4) node[above] {bump $\bar{2}$ \hspace{40pt} };

\draw [black] (0,-3) rectangle (0.8,-2.2);
\draw [black] (0.8,-3) rectangle (1.6,-2.2);
\draw [black] (1.6,-3) rectangle (2.4,-2.2);
\draw [black] (2.4,-3) rectangle (3.2,-2.2);
\draw [black] (0,-3.8) rectangle (0.8,-3);
\draw [black] (0.8,-3.8) rectangle (1.6,-3);
\draw [black] (1.6,-3.8) rectangle (2.4,-3);
\draw [black] (0,-4.6) rectangle (0.8,-3.8);
\draw [black] (0.8,-4.6) rectangle (1.6,-3.8);
\node at (0.4,-2.6) {1};
\node at (1.2,-2.6) {1};
\node at (2,-2.6) {$\bar{1}$};
\node at (2.8,-2.6) {$\bar{2}$};
\node at (0.4,-3.4) {2};
\node at (1.2,-3.4) {$\mathbf{2}$};
\node at (2,-3.4) {3};
\node at (0.4,-4.2) {3};
\node at (1.2,-4.2) {$\bar{3}$};
\node at (3.6,-4.2) {+$\bar{2}$};

\draw [thick] [->] (4,-3.4) -- (5.5,-3.4) node[above] {cancel 2,$\bar{2}$ \hspace{40pt} };

\draw [black] (6,-3) rectangle (6.8,-2.2);
\draw [black] (6.8,-3) rectangle (7.6,-2.2);
\draw [black] (7.6,-3) rectangle (8.4,-2.2);
\draw [black] (8.4,-3) rectangle (9.2,-2.2);
\draw [black] (6,-3.8) rectangle (6.8,-3);
\draw [black] (6.8,-3.8) rectangle (7.6,-3);
\draw [black] (7.6,-3.8) rectangle (8.4,-3);
\draw [black] (6,-4.6) rectangle (6.8,-3.8);
\draw [black] (6.8,-4.6) rectangle (7.6,-3.8);
\node at (6.4,-2.6) {1};
\node at (7.2,-2.6) {1};
\node at (8,-2.6) {$\bar{1}$};
\node at (8.8,-2.6) {$\bar{2}$};
\node at (6.4,-3.4) {2};
\node at (8,-3.4) {3};
\node at (6.4,-4.2) {3};
\node at (7.2,-4.2) {$\bar{3}$};

\draw [thick] [->] (10,-3.4) -- (11.5,-3.4) node[above] {jeu de taquin \hspace{40pt} };

\draw [black] (0,-6) rectangle (0.8,-5.2);
\draw [black] (0.8,-6) rectangle (1.6,-5.2);
\draw [black] (1.6,-6) rectangle (2.4,-5.2);
\draw [black] (2.4,-6) rectangle (3.2,-5.2);
\draw [black] (0,-6.8) rectangle (0.8,-6);
\draw [black] (0.8,-6.8) rectangle (1.6,-6);
\draw [black] (0,-7.6) rectangle (0.8,-6.8);
\draw [black] (0.8,-7.6) rectangle (1.6,-6.8);
\node at (0.4,-5.6) {1};
\node at (1.2,-5.6) {1};
\node at (2,-5.6) {$\bar{1}$};
\node at (2.8,-5.6) {$\bar{2}$};
\node at (0.4,-6.4) {2};
\node at (1.2,-6.4) {3};
\node at (0.4,-7.2) {3};
\node at (1.2,-7.2) {$\bar{3}$};
\node at (4.5,-6.4) { = (P $\stackrel{\mathcal{B}}{\leftarrow} \bar{1}$)};
\end{tikzpicture}
}
\end{center} 
\end{figure}

\end{example}
The insertion algorithm can be applied to a word $w=w_{1},...,w_{m}$, with $w_{i}\in [n,\bar{n}]$, starting with an empty tableau. The output, denoted by $\mathcal{B}(w)$, is a symplectic Young tableau along with an oscillating tableau $f = (f^{0},...,f^{m})$ that records the shapes of the symplectic tableau for all the intermediate steps. More specifically, if we denote by $P(i)$ the tableau after the insertion of the $i$-th letter then $shP(i)=f^{i}$ for $i=1,..,m$. 

\begin{example}
Applying the Berele algorithm to the word $w=\bar{3}2\bar{1}\bar{3}121$ yields the pair $\mathcal{B}(w)=(P,f=(f^{0},...,f^{7}))$, where
\begin{figure}[h]
\begin{center}
\scalebox{0.9}{
\begin{tikzpicture}
\node at (-0.4,-0.4) {$P(i):$};
\node at (-0.4,-3) {$f^i:$};

\node at (0.4,-0.4) {$\emptyset$};
\node at (0.4,-3) {$\emptyset$};

\draw [black] (1.3,0) rectangle (2.1,-0.8);
\node at (1.7,-0.4) {$\bar{3}$};
\node at (1.7,-3) {$(1)$};

\draw [black] (2.6,0) rectangle (3.4,-0.8);
\node at (3,-0.4) {2};
\draw [black] (2.6,-0.8) rectangle (3.4,-1.6);
\node at (3,-1.2) {$\bar{3}$};
\node at (3,-3) {$(1,1)$};

\draw [black] (3.9,0) rectangle (4.7,-0.8);
\node at (4.3,-0.4) {$\bar{1}$};
\draw [black] (3.9,-0.8) rectangle (4.7,-1.6);
\node at (4.3,-1.2) {2};
\draw [black] (3.9,-1.6) rectangle (4.7,-2.4);
\node at (4.3,-2) {$\bar{3}$};
\node at (4.3,-3) {$(1,1,1)$};

\draw [black] (5.2,0) rectangle (6,-0.8);
\node at (5.6,-0.4) {$\bar{1}$};
\draw [black] (5.2,-0.8) rectangle (6,-1.6);
\node at (5.6,-1.2) {2};
\draw [black] (5.2,-1.6) rectangle (6,-2.4);
\node at (5.6,-2) {$\bar{3}$};
\draw [black] (6,0) rectangle (6.8,-0.8);
\node at (6.4,-0.4) {$\bar{3}$};
\node at (6.2,-3) {$(2,1,1)$};

\draw [black] (7.3,0) rectangle (8.1,-0.8);
\node at (7.7,-0.4) {$2$};
\draw [black] (7.3,-0.8) rectangle (8.1,-1.6);
\node at (7.7,-1.2) {$\bar{3}$};
\draw [black] (8.1,0) rectangle (8.9,-0.8);
\node at (8.5,-0.4) {$\bar{3}$};
\node at (8.1,-3) {$(2,1)$};

\draw [black] (9.4,0) rectangle (10.2,-0.8);
\node at (9.8,-0.4) {$2$};
\draw [black] (9.4,-0.8) rectangle (10.2,-1.6);
\node at (9.8,-1.2) {$\bar{3}$};
\draw [black] (10.2,0) rectangle (11,-0.8);
\node at (10.6,-0.4) {$2$};
\draw [black] (10.2,-0.8) rectangle (11,-1.6);
\node at (10.6,-1.2) {$\bar{3}$};
\node at (10.2,-3) {$(2,2)$};

\draw [black] (11.5,0) rectangle (12.3,-0.8);
\node at (11.9,-0.4) {$1$};
\draw [black] (11.5,-0.8) rectangle (12.3,-1.6);
\node at (11.9,-1.2) {$2$};
\draw [black] (11.5,-1.6) rectangle (12.3,-2.4);
\node at (11.9,-2) {$\bar{3}$};
\draw [black] (12.3,0) rectangle (13.1,-0.8);
\node at (12.7,-0.4) {$2$};
\draw [black] (12.3,-0.8) rectangle (13.1,-1.6);
\node at (12.7,-1.2) {$\bar{3}$};
\node at (12.4,-3) {$(2,2,1)$ .};
\end{tikzpicture}
}
\end{center} 
\end{figure}
\end{example}

Analogously to the Robinson-Schensted correspondence, Berele proved the following result.
\begin{theorem}[\cite{Berele_1986}]
$\mathcal{B}$ is a bijection between words $w_{1},...,w_{m}$ in the alphabet $[n,\bar{n}] = \{1,\bar{1},...,n ,\bar{n}\}$ and pairs $(P,f)\in \mathcal{T}_{[n,\bar{n}]}\times \mathcal{O}_{m}(n)$ in which $P$ is a symplectic tableau and $f$ is an oscillating tableau with shape $f^m = sh P$.
\end{theorem}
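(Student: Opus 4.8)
Since we already have a well-defined forward map $w \mapsto (P,f)$, the cleanest route is to show that the insertion process is reversible step-by-step: from a pair $(P,f) \in \mathcal{T}_{[n,\bar n]} \times \mathcal{O}_m(n)$ I would recover the word $w = w_1 \cdots w_m$ one letter at a time, reading the oscillating tableau $f$ backwards. At stage $i$ (working down from $i=m$), the recording data $f^i, f^{i-1}$ tells me whether the $i$-th insertion step added a box or deleted one, and this dichotomy governs which of two \emph{reverse} operations to apply to the current tableau $P(i)$ in order to produce $P(i-1)$ together with the extracted letter $w_i$.

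\textbf{First I would make precise the two reverse operations.} If $f^i \setminus f^{i-1} = (1)$, then the $i$-th step was an ordinary Berele bumping that terminated by placing a box in the cell $f^i \setminus f^{i-1}$; this case is handled exactly as in the classical Robinson--Schensted reverse bumping (reverse row insertion), which strictly increases the bumped letter as it moves up through the rows and ejects $w_i$ off the top. If instead $f^{i-1} \setminus f^i = (1)$, then the $i$-th step ended in a cancellation of some pair $l, \bar l$ followed by jeu de taquin; here I must run jeu de taquin in reverse (sliding the empty box back from the cell $f^{i-1}\setminus f^i$ up to the position where the cancellation occurred), reinsert the cancelled pair $l,\bar l$, and then reverse-bump to extract $w_i$. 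The key point I would verify is that each individual operation is \emph{deterministic and invertible}: the jeu de taquin slide has a unique reverse because the switching rule in the definition is reversible given the target empty cell, and reverse row-insertion is the standard inverse bumping whose determinism rests on conditions \textbf{(S1)}--\textbf{(S2)}.

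\textbf{The main obstacle is the interaction between the two mechanisms,} specifically showing that the cancellation-plus-jeu-de-taquin case reverses cleanly and that condition \textbf{(S3)} is both the correct trigger for cancellation in the forward direction and correctly restored in the reverse direction. Concretely, I must argue that when $f^{i-1}\setminus f^i = (1)$ I can unambiguously identify the row $l$ at which the forbidden $\bar l$ was bumped into row $l+1$, reconstruct the pair $(l,\bar l)$ that was erased, and confirm that reinserting it and reverse-bumping yields a letter $w_i$ lying in a row consistent with the symplectic conditions. I would check that the result $P(i-1)$ is again a genuine symplectic tableau of shape $f^{i-1}$, so that the reverse procedure stays within $\mathcal{T}_{[n,\bar n]}$ at every stage. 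Establishing that the reverse map is well-defined on all of $\mathcal{T}_{[n,\bar n]}\times\mathcal{O}_m(n)$ and that it is a two-sided inverse of $\mathcal{B}$ then gives the bijection.

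\textbf{Finally I would assemble these pieces.} Running the reverse operations for $i=m,m-1,\dots,1$ produces a word $w$ and the empty tableau $f^0=\emptyset$ at the end, and by construction composing with $\mathcal{B}$ returns $(P,f)$; conversely, applying the reverse procedure to $\mathcal{B}(w)$ recovers $w$ because each forward insertion step is undone exactly by its designated reverse operation. Since both composites are the identity, $\mathcal{B}$ is a bijection. I expect the bulk of the write-up to be the careful case analysis in the cancellation step; the box-addition case can be cited essentially verbatim from the classical theory.
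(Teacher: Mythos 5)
The paper does not actually prove this statement; it is quoted from \cite{Berele_1986}, so there is no internal proof to compare against. Your strategy --- constructing the inverse map by reading the oscillating tableau backwards and undoing each insertion step, splitting on whether $f^i\setminus f^{i-1}=(1)$ or $f^{i-1}\setminus f^i=(1)$ --- is exactly Berele's original strategy, and the box-addition case is indeed handled by classical reverse row-bumping. So the outline is the right one.

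The gap is in the cancellation case, and it is precisely the part you defer. You assert that the jeu de taquin slide ``has a unique reverse because the switching rule in the definition is reversible given the target empty cell.'' That observation only reverses \emph{individual} swaps: given the hole's current position one can decide whether it last arrived from the left or from above. It does not tell you when the reverse slide must \emph{terminate}. In the forward direction the hole starts at the interior cell of row $l$ that contained the bumped $\bar{l}$ and migrates out to the corner $f^{i-1}\setminus f^{i}$; in reverse, the hole starts at that outer corner, but the cell where it must stop is an interior cell of the tableau, not an inner corner of a skew shape, so the classical ``slide until you reach the designated corner'' termination is unavailable. One needs a criterion, stated purely in terms of the current tableau, that identifies the unique cell $(l,j)$ at which to halt, reinsert $\bar{l}$ there, and then reverse-bump $l$ up through rows $l-1,\dots,1$ --- together with a proof that exactly one such cell exists, that the choice of $l$ is forced, and that the resulting tableau of shape $f^{i-1}$ still satisfies \textbf{(S1)}--\textbf{(S3)}. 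This uniqueness is the real content of Berele's theorem (the forward map is clearly well defined; both injectivity and surjectivity hinge on the reverse step), and the proposal names the obstacle without supplying the idea that resolves it. As written, the argument establishes at most that \emph{if} a consistent reverse rule exists then $\mathcal{B}$ is a bijection.
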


O'Connell observed, in \cite{O'Connell_2003b}, that if the Robinson-Schensted algorithm is applied to a random word, then the shape of the output tableau evolves as a Markov chain on the set of partitions with transition kernel given in terms of Schur functions. \\

We will devote the rest of this section to the study of the evolution of the shape of the symplectic tableau when we Berele-insert a random word from the alphabet $[n,\bar{n}]$. We first need to introduce some notation.\\

Fix $n\geq 1$ and $a=(a_{1},...,a_{n})\in \mathbb{R}^n_{>0}$. If $w=w_{1},...,w_{m}$ is a word from $[n,\bar{n}]$, we write
\[a^w = a_{w_{1}}...a_{w_{m}}\]
where we use the convention that $a_{\bar{l}}=a_{l}^{-1}$, for $1\leq l \leq n$.\\

\noindent For a symplectic Young tableau $P$ with entries from $[n,\bar{n}]$, we write
\[a^P = \prod_{l=1}^n a_{l}^{\#\{l \text{'s  in }P\}-\#\{\bar{l}\text{'s  in }P\}}.\]

We can now give a combinatorial definition for the symplectic Schur functions.
\begin{definition}(\cite{King_1971})
Let $\lambda\in \Lambda_{n}$ be a partition with length at most $n$. The \emph{symplectic Schur function} parametrized by $\lambda$ is given by
\[Sp_{\lambda}^{(n)}(a) = \sum_{\substack{P\in \mathcal{T}_{[n,\bar{n}]}:\\ shP=\lambda}}a^P.\]
\label{Schur_combinatorial}
\end{definition}
The symplectic Schur function satisfies the Pieri rule, i.e for every $\lambda\in \Lambda_{n}$ and $a\in \mathbb{R}^n_{>0}$ the following identity holds (\cite{Sundaram_1990c})
\begin{equation}
\label{eq:Pieri_Schur}
Sp_{\lambda}^{(n)}(a)\sum_{i=1}^n(a_{i}+a_{i}^{-1})=\sum_{l=1}^n\big( Sp_{\lambda+e_{l}}^{(n)}(a)\mathbbm{1}_{\lambda+e_{l}\in \Lambda_{n}}+Sp_{\lambda-e_{l}}^{(n)}(a)\mathbbm{1}_{\lambda-e_{l}\in \Lambda_{n}}\big).
\end{equation}\\

Let us now consider a sequence of independent random variables $\{w_{k}$, $k\geq 1\}$ taking values in $[n,\bar{n}]$ with common distribution given, for $1\leq l \leq n$, by
\begin{equation}
\label{eq:letter_distribution}
\begin{array}{ll}
\rho(l):=\mathbb{P}[w_{k}=l] = \dfrac{a_{l}}{\sum_{i=1}^n(a_{i}+a_{i}^{-1})}\\\rho(\bar{l}):=\mathbb{P}[w_{k}=\bar{l}] = \dfrac{a_{l}^{-1}}{\sum_{i=1}^n(a_{i}+a_{i}^{-1})}
\end{array} .
\end{equation}\\

Let $(P(m),f(m))\in \mathcal{T}_{[n,\bar{n}]}\times \mathcal{O}_{m}(n)$ be the pair of tableau obtained when we apply the Berele algorithm to the random word $w_{1},...,w_{m}$ from $[n,\bar{n}]$ with distribution $\rho$. Then, due to the fact that the Berele insertion algorithm is a bijection between words and pairs of tableaux, we conclude that for $(P,f= (f^0,...,f^m))\in \mathcal{T}_{[n,\bar{n}]}\times \mathcal{O}_{m}(n)$
\begin{equation}
\mathbb{P}[(P(m),f(m))=(P,f)] = \dfrac{a^P}{(\sum_{i=1}^n(a_{i}+a_{i}^{-1}))^m}\mathbbm{1}_{shP=f^m}.
\label{eq:probability_(P,f)_Berele}
\end{equation}
Summing \eqref{eq:probability_(P,f)_Berele}
over $(P,f= (f^0,...,f^m))\in \mathcal{T}_{[n,\bar{n}]}\times \mathcal{O}_{m}(n)$ such that $shP = f^m = \lambda$ we conclude that
\[\mathbb{P}[shP(m) = \lambda] = \dfrac{1}{(\sum_{i=1}^n(a_{i}+a_{i}^{-1}))^m}Sp_{\lambda}^{(n)}(a)Q_{m}^{\lambda}(n)\]
where we recall that $Q_{m}^{\lambda}(n)$ is the number of oscillating tableaux $f \in \mathcal{O}_{m}(n)$ with shape $\lambda$. Using the fact that
\[\sum_{\lambda \in \Lambda_{n}}\mathbb{P}[shP(m)=\lambda] = 1\] 
we may recover the following well-known character identity (\cite{Sundaram_1990b})
\[\sum_{\lambda\in \Lambda_{n}}Sp_{\lambda}^{(n)}(a)Q_{m}^{\lambda}(n) = \Big(\sum_{i=1}^n(a_{i}+a_{i}^{-1})\Big)^m.\]
If on the other hand we sum \eqref{eq:probability_(P,f)_Berele} over $P \in \mathcal{T}_{[n,\bar{n}]}$ we obtain a distribution for the whole evolution of shapes of $P$, i.e. we conclude that for $f = (f^0,f^1,...,f^m)\in \mathcal{O}_{m}(n)$
\[\mathbb{P}[shP(1)=f^1,...,shP(m)=f^m] = \dfrac{1}{(\sum_{i=1}^n(a_{i}+a_{i}^{-1}))^m}Sp_{f^m}^{(n)}(a).\]
Note that, by the definition of conditional probability, we have that
\begin{equation*}
\begin{split}
\mathbb{P}[shP(m)=f^m|shP(1)=f^1,...,sh&P(m-1)=f^{m-1}]\\
&=\dfrac{1}{\sum_{i=1}^n(a_{i}+a_{i}^{-1})}\dfrac{Sp_{f^m}^{(n)}(a)}{Sp_{f^{m-1}}^{(n)}(a)}\mathbbm{1}_{f^{m-1}\rightsquigarrow f^m}
\end{split}
\end{equation*}
where the symbol $\mu \rightsquigarrow \lambda$ means that the partition $\lambda$ can be obtained from $\mu$ by either an addition or a deletion of a box.

The last observation summarises to the following result.
\begin{theorem}\label{evolution_YT_Berele}
We fix $n\geq 1$ and $a=(a_{1},...,a_{n})\in \mathbb{R}^n_{>0}$. Let $P(0)$ be the empty tableau, $w_{1},w_{2},...$ be a sequence of independent random variables chosen independently from $\rho=\{\rho(w),w\in [n,\bar{n}]\}$, given in \eqref{eq:letter_distribution}, and $\{P(m), m\geq 1\}$ be the sequence of symplectic tableaux obtained from the empty tableau after sequentially inserting the randomly chosen letters $w_{1},w_{2},...$, i.e. for $m\geq 1$, $P(m)$ is given by
\[P(m)=(P(m-1)\stackrel{\mathcal{B}}{\leftarrow} w_{m}).\]
Then the shape $\{shP(m),m\geq 0\}$ is a Markov chain on the set of partitions $\Lambda_{n}$, started from the empty partition, with transition kernel
\[\Pi(\mu, \lambda) = \dfrac{1}{\sum_{i=1}^n(a_{i}+a_{i}^{-1})}\dfrac{Sp_{\lambda}^{(n)}(a)}{Sp_{\mu}^{(n)}(a)}\mathbbm{1}_{\mu \rightsquigarrow \lambda}.\]
\end{theorem}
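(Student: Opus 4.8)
The plan is to exploit the fact that the $\rho$-distributed random word pushes forward, through the bijection $\mathcal{B}$, to an explicit measure on pairs $(P,f)$, and then to extract the law of the shape process by marginalisation. Most of the necessary computation has already been assembled above; what remains is to organise it so that the Markov property and the time-homogeneity of the kernel become transparent. The governing input is the pushforward formula \eqref{eq:probability_(P,f)_Berele}, which holds precisely because $\mathcal{B}$ is a bijection.

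First I would record the joint law of the full shape trajectory. The event $\{shP(1)=f^1,\dots,shP(m)=f^m\}$ corresponds, under $\mathcal{B}$, to those words whose recording oscillating tableau is exactly $f=(f^0,\dots,f^m)$; summing \eqref{eq:probability_(P,f)_Berele} over all symplectic tableaux $P$ of terminal shape $f^m$ and invoking the combinatorial definition of the symplectic Schur function (Definition \ref{Schur_combinatorial}) gives
\[
\mathbb{P}[shP(1)=f^1,\dots,shP(m)=f^m]=\frac{Sp_{f^m}^{(n)}(a)}{\big(\sum_{i=1}^n(a_i+a_i^{-1})\big)^m}.
\]
I would emphasise that this quantity vanishes unless the sequence is genuinely oscillating, i.e. unless consecutive shapes differ by a single box, since no word produces a non-oscillating recording tableau; this is exactly where the later constraint $f^{m-1}\rightsquigarrow f^m$ originates.

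Next I would form the one-step conditional probability as the ratio of the joint laws at times $m$ and $m-1$. Because the joint law depends on the whole trajectory only through the terminal shape, this ratio telescopes to
\[
\frac{1}{\sum_{i=1}^n(a_i+a_i^{-1})}\,\frac{Sp_{f^m}^{(n)}(a)}{Sp_{f^{m-1}}^{(n)}(a)}\,\mathbbm{1}_{f^{m-1}\rightsquigarrow f^m},
\]
and the decisive observation is that this expression depends on the history only through the pair $(f^{m-1},f^m)$, and not on the index $m$. That is precisely the assertion that $\{shP(m)\}$ is a time-homogeneous Markov chain, whose transition kernel is obtained by setting $\mu=f^{m-1}$ and $\lambda=f^m$, recovering the stated $\Pi(\mu,\lambda)$; the chain is started from the empty partition since $P(0)$ is the empty tableau.

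Finally, as a consistency check I would confirm that $\Pi$ is a genuine stochastic kernel. Fixing $\mu$, the sum $\sum_{\lambda}\Pi(\mu,\lambda)$ ranges over partitions $\lambda\in\Lambda_n$ reachable from $\mu$ by adding or removing one box, and the required identity $\sum_{\mu\rightsquigarrow\lambda}Sp_{\lambda}^{(n)}(a)=Sp_{\mu}^{(n)}(a)\sum_{i}(a_i+a_i^{-1})$ is exactly the Pieri rule \eqref{eq:Pieri_Schur}. I do not expect a genuine obstacle here, since the bijectivity of $\mathcal{B}$ has already reduced the problem to bookkeeping; the one point demanding care is faithfully tracking the oscillating-tableau constraint so that it surfaces as the indicator $\mathbbm{1}_{\mu\rightsquigarrow\lambda}$ and so that the quotient of Schur functions is only formed along admissible transitions, where the denominator $Sp_{\mu}^{(n)}(a)$ is strictly positive for $a\in\mathbb{R}^n_{>0}$.
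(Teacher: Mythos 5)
Your proposal is correct and follows essentially the same route as the paper: it pushes the product measure on words through the bijection $\mathcal{B}$ to obtain \eqref{eq:probability_(P,f)_Berele}, marginalises over $P$ using Definition \ref{Schur_combinatorial} to get the joint law of the shape trajectory, and reads off the transition kernel as a ratio. Your closing consistency check that $\Pi$ is stochastic via the Pieri rule \eqref{eq:Pieri_Schur} matches the paper's subsequent observation that $h_{n}$ is harmonic and that $\Pi$ is a Doob $h$-transform.
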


Let us consider the $n$-dimensional Markov chain $X = (X_{l}(m),1\leq l \leq n;m\geq 0)$ where for each $1\leq l \leq n$, $X_{l}(0)=0$ and for $m>0$
\[X_{l}(m) = |\{k\leq m:w_{k}=l\}| -|\{k\leq m:w_{k}=\bar{l}\}| \]
where $w_{1},w_{2},...$ are letters from the alphabet $[n,\bar{n}]$ chosen independently with distribution $\rho$. Then $X$ is a $n$-dimensional random walk, started from the origin, in which the $l$-th component jumps to the right with probability $\rho(l)$ and to the left with probability $\rho(\bar{l})$. Let us denote by $\hat{\rho}$ the transition kernel of $X$ killed upon leaving $\Lambda_{n}$. 

From the Pieri identity \eqref{eq:Pieri_Schur} for the symplectic Schur function it is easy to check that the function 
\[h_{n}(x) = \prod_{l=1}^n a_{l}^{-x_{l}}Sp_{x}^{(n)}(a)\]
is harmonic for $\hat{\rho}$. Moreover, we observe that we may re-write the transition kernel $\Pi$ as follows
\[\Pi(\mu, \lambda) = \hat{\rho}(\mu, \lambda)\dfrac{h_{n}(\lambda)}{h_{n}(\mu)}\]
therefore we may identify the shape of a Young tableau obtained from sequentially inserting, randomly chosen with distribution $\rho$, letters from the alphabet $[n,\bar{n}]$ as the Doob $h$-transform of $X$ killed upon leaving $\Lambda_{n}$.
\section{The Berele insertion algorithm for Gelfand-Tsetlin patterns}

As we already mentioned, a different way to represent a symplectic Young tableau is via a symplectic Gelfand-Tsetlin pattern. Let us make this statement more precise. For a symplectic tableau $P$ with entries from $[n,\bar{n}]$ and $k\in [n,\bar{n}]$ let $z^{order(k)}=shP^k$, where $order(k)$ is the order of $k$ in the alphabet $[n,\bar{n}]$ and $P^k$ is the sub-tableau of $P$ that contains only entries of order less or equal to $k$. For example, if 
\begin{figure}[h]
\begin{center}
\scalebox{0.9}{
\begin{tikzpicture}
\node at (-0.5,0) {P = };
\draw [black] (0,0) rectangle (0.8,0.8);
\draw [black] (0.8,0) rectangle (1.6,0.8);
\draw [black] (1.6,0) rectangle (2.4,0.8);
\draw [black] (2.4,0) rectangle (3.2,0.8);
\draw [black] (3.2,0) rectangle (4,0.8);
\draw [black] (0,-0.8) rectangle (0.8,0);
\draw [black] (0.8,-0.8) rectangle (1.6,0);
\node at (0.4,0.4) {1};
\node at (1.2,0.4) {$\bar{1}$};
\node at (2,0.4) {2};
\node at (2.8,0.4) {2};
\node at (3.6,0.4) {$\bar{2}$};
\node at (0.4,-0.4) {$\bar{2}$};
\node at (1.2,-0.4) {$\bar{2}$};
\end{tikzpicture}
}
\end{center} 
\end{figure}

\noindent then $z^1 = shP^{1}=(1)$, $z^{2}=shP^{\bar{1}}=(2)$, $z^3=shP^{2}=(4,0)$ and $z^4 = shP^{\bar{2}}=(5,2)$. By the definition of the symplectic Young tableau it follows that $z^{k-1}\preceq z^k$, for $1\leq k \leq N$. This observation motivates the definition of the symplectic Gelfand-Tsetlin pattern.
\begin{definition}
\label{GT_cone(Berele)}
Let $N$ be a positive integer. A \emph{(symplectic) Gelfand-Tsetlin pattern} $Z = (z^1,...,z^N)$ with $N$ levels is a collection of partitions with $z^{2l-1},z^{2l}\in \Lambda_{l}$, for $1\leq l \leq \big[\frac{N+1}{2}\big]$, which satisfy the interlacing conditions
\[z^1\preceq z^2 \preceq ... \preceq z^N.\]
We denote the set of Gelfand-Tsetlin patterns with $N$ levels by $\mathbb{K}^N$.
\end{definition}
\noindent Schematically, a symplectic Gelfand-Tsetlin pattern is represented as in figure \ref{symp_GT_cone(Berele)}. In this paper we will only focus on patterns with even number of levels $N=2n$.
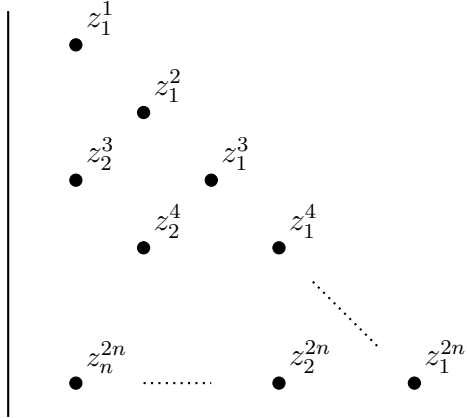
\begin{figure}[h]
\begin{center}
\begin{tikzpicture}[scale = 0.9]

\draw [thick] (0,0) -- (0,6) ;
\draw [fill] (1,5.5) circle [radius=0.09];
\node [above right, black] at (1,5.5) {\large{$z_{1}^1$}};
\draw [fill] (2,4.5) circle [radius=0.09];
\node [above right, black] at (2,4.5) {\large{$z_{1}^2$}};
\draw [fill] (3,3.5) circle [radius=0.09];
\node [above right, black] at (3,3.5) {\large{$z_{1}^3$}};
\draw [fill] (1,3.5) circle [radius=0.09];
\node [above right, black] at (1,3.5) {\large{$z_{2}^3$}};
\draw [fill] (4,2.5) circle [radius=0.09];
\node [above right, black] at (4,2.5) {\large{$z_{1}^4$}};
\draw [fill] (2,2.5) circle [radius=0.09];
\node [above right, black] at (2,2.5) {\large{$z_{2}^4$}};
\draw [fill] (6,0.5) circle [radius=0.09];
\node [above right, black] at (6,.5) {\large{$z_{1}^{2n}$}};
\draw [fill] (4,0.5) circle [radius=0.09];
\node [above right, black] at (4,.5) {\large{$z_{2}^{2n}$}};
\draw [fill] (1,0.5) circle [radius=0.09];
\node [above right, black] at (1,.5) {$z_{n}^{2n}$};
\draw [dotted, thick] (4.5,2) -- (5.5,1);
\draw [dotted, thick] (2,0.5) -- (3,0.5);
\end{tikzpicture}
\end{center} 
\caption{An element of $\mathbb{K}^{2n}$. Both levels with indices $2l$, $2l-1$ contain $l$ particles. At each level the particles satisfy the interlacing property, i.e. $z^{2l-1}_{l}\leq z^{2l}_{l}\leq z^{2l-1}_{l-1}\leq ... \leq z^{2l-1}_{1}\leq z^{2l}_{1}$ and $z^{2l+1}_{l+1}\leq z^{2l}_{l}\leq z^{2l+1}_{l}\leq ... \leq z^{2l}_{1}\leq z^{2l+1}_{1}$.} 
\label{symp_GT_cone(Berele)}
\end{figure}

Before we describe the effect of Berele's insertion algorithm on the particles of a GT pattern, let us \enquote{translate} the changes that may occur to a Young tableau in the language of particles. Suppose that a letter of order $k$ is added at the $i$-th row of a Young tableau, this means that $z^k_{i}$ will increase by $1$. In the language of particles this increase corresponds to a right jump of the particle at position $z^k_{i}$ by one step. Similarly, if a letter of order $k$ is removed from the $i$-th row of a Young tableau then $z^k_{i}$ will decrease by $1$ which represents a left jump for the corresponding particle.

Inserting a letter $l\in [n,\bar{n}]$ leads to an \textbf{attempted} right jump of the particle $z^k_{1}$, where $k=order(l)$ is the order of the letter $l$, at the corresponding Gelfand-Tsetlin pattern.

If a particle $z^k_{i}$, for some $1\leq i \leq [\frac{k+1}{2}]$, $1\leq k \leq 2n$, attempts a right jump, then
\begin{enumerate}[i)]
\item if $z^k_{i}=z^{k+1}_{i}$ the jump is performed and $z^{k+1}_{i}$ is pushed one step to the right. This means that the letter of order $k$ is either added at the end of the $i$-th row or causes a letter of order greater than $k+1$ being bumped from the $i$-th row of the corresponding tableau;
\item if $z^k_{i}<z^{k+1}_{i}$ we have two different cases
\begin{enumerate}[a.]
\item if $i=\frac{k+1}{2}$, with $k$ odd, the jump is suppressed and $z^{k+1}_{i}$ is pulled to the left instead. The transition described here corresponds to the cancellation step in Berele's algorithm;
\item for all the other particles, the jump is performed and the particle $z^{k+1}_{i+1}$ is pulled to the right. This means that a letter of order $k$ is added at the $i$-th row of the corresponding tableau and bumps another letter of order $k+1$ to the row beneath.
\end{enumerate}
\end{enumerate}

If $z^k_{i}$ performs a left jump, then it triggers the leftward move of exactly one of its nearest lower neighbours; the left, $z^{k+1}_{i+1}$, if $z^{k}_{i}=z^{k+1}_{i+1}$ and therefore the jump would lead to violation of the interlacing condition and the right, $z^{k+1}_{i}$, otherwise. The transition we describe here corresponds to the jeu de taquin step of Berele's algorithm. Let us explain why.\\
We assume without loss of generality that $k=2l$ and $z^{2l}_{i}$, for some $1\leq i \leq l$, jumps to the left. This means that a letter $\leq \bar{l}$ is removed from a box in the $i$-th row of the tableau. Note that the empty box can now have only letters $\geq l+1$ to its right. If $z^{2l}_{i}=z^{2l+1}_{i+1}$, then beneath the empty box there is a letter $\leq l+1$, therefore according to the jeu de taquin algorithm we should swap the empty box with the box beneath. Therefore, the $(i+1)$-th row now contains one less box with entry $\leq l+1$, causing $z^{2l+1}_{i+1}$ to jump to the left.\\

Let us close this section by giving an example.
\begin{example}
Suppose $n=2$. Inserting a letter $\bar{1}$, in step a) the particle $z^2_{1}$ performs a rightward jump. Since $z^2_{1}<z^3_{1}$, $z^2_{1}$ triggers the move of $z^3_{2}$. In b) $z^3_{2}$ attempts to jump to the right, but since $z^3_{2}<z^4_{2}$, the jump is suppressed. Finally, in step c) since the jump of the particle $z^3_{2}$ was suppressed, $z^4_{2}$ performs a leftward jump instead.
\begin{figure}[h]
\begin{center}
\begin{tikzpicture}[scale = 0.75]
\node [right, black] at (-1,6) {\textit{a)}};
\draw [thick] (0,2) -- (0,6.5) ;
\draw [fill] (0.8,5.5) circle [radius=0.09];
\node [above, black] at (0.8,5.5) {1};
\draw [fill] (1.6,4.5) circle [radius=0.09];
\node [above, black] at (1.6,4.5) {2};
\draw [thick, red] [->](1.7,4.55) to [bend left = 60] (2.3,4.55);
\draw [fill, blue] (2.4,4.5) circle [radius=0.09];
\node [above, blue] at (2.4,4.5) {3};
\draw [fill] (2.4,3.5) circle [radius=0.09];
\node [above, black] at (2.4,3.5) {3};
\draw [fill] (0.8,3.5) circle [radius=0.09];
\node [above, black] at (0.8,3.5) {1};
\draw [fill] (3.2,2.5) circle [radius=0.09];
\node [above, black] at (3.2,2.5) {4};
\draw [fill] (1.6,2.5) circle [radius=0.09];
\node [above, black] at (1.6,2.5) {2};

\node [right, black] at (5,6) {\textit{b)}};
\draw [thick] (6,2) -- (6,6.5) ;
\draw [fill] (6.8,5.5) circle [radius=0.09];
\node [above, black] at (6.8,5.5) {1};
\draw [fill, blue] (8.4,4.5) circle [radius=0.09];
\node [above, blue] at (8.4,4.5) {3};
\draw [fill] (8.4,3.5) circle [radius=0.09];
\node [above, black] at (8.4,3.5) {3};
\draw [fill] (6.8,3.5) circle [radius=0.09];
\node [above, black] at (6.8,3.5) {1};
\draw [thick, red] [->](6.9,3.6) to [bend left = 60] (7.5,3.6);
\draw [thick, red] (7.1,3.5)--(7.3,4);
\draw [fill] (9.2,2.5) circle [radius=0.09];
\node [above, black] at (9.2,2.5) {4};
\draw [fill] (7.6,2.5) circle [radius=0.09];
\node [above, black] at (7.6,2.5) {2};
\node [right, black] at (11,6) {\textit{c)}};
\draw [thick] (12,2) -- (12,6.5) ;
\draw [fill] (12.8,5.5) circle [radius=0.09];
\node [above, black] at (12.8,5.5) {1};
\draw [fill, blue] (14.4,4.5) circle [radius=0.09];
\node [above, blue] at (14.4,4.5) {3};
\draw [fill] (14.4,3.5) circle [radius=0.09];
\node [above, black] at (14.4,3.5) {3};
\draw [fill] (12.8,3.5) circle [radius=0.09];
\node [above, black] at (12.8,3.5) {1};
\draw [fill] (15.2,2.5) circle [radius=0.09];
\node [above, black] at (15.2,2.5) {4};
\draw [fill] (13.6,2.5) circle [radius=0.09];
\node [above, black] at (13.6,2.5) {2};
\draw [fill, blue] (12.8,2.5) circle [radius=0.09];
\node [above, blue] at (12.8,2.5) {1};
\draw [thick, red] [->](13.4,2.6) to [bend right = 60] (12.9,2.6);
\end{tikzpicture}
\end{center} 
\end{figure}
\end{example}

\section{A $q$-deformation of Berele's insertion algorithm}
In this section we consider a generalization of Berele's insertion algorithm that depends on a parameter $q\in (0,1)$. The $q$-deformed algorithm can be thought as a randomization of the usual algorithm, as inserting a letter to a given tableau results to a distribution over a set of tableaux. The proposed $q$-deformation has the property that the shape of the Young tableau again evolves as a Markov chain on $\Lambda_{n}$.

Since a symplectic tableau is equivalent to a symplectic Gelfand-Tsetlin pattern we will present the algorithm in terms of dynamics on the pattern.

For $q\in (0,1)$ and $(x,y) \in \Lambda_{n} \times \Lambda_{n}$ or $(x,y) \in \Lambda_{n-1} \times \Lambda_n$ satisfying the interlacing condition $x\preceq y$, we define the quantities
\begin{equation}
r_{i}(y;x)=q^{y_{i}-x_{i}}\dfrac{1-q^{x_{i-1}-y_{i}}}{1-q^{x_{i-1}-x_{i}}} \text{,\hspace{5pt} }l_{i}(y;x)=q^{x_{i}-y_{i+1}}\dfrac{1-q^{y_{i+1}-x_{i+1}}}{1-q^{x_{i}-x_{i+1}}}
\label{eq:probabilities(Berele)}
\end{equation}
for $1\leq i \leq n$, with the convention that if $x=(x_{1},...,x_{n}) \in \Lambda_{n}$, we set $x_{n+1}\equiv 0$ and $x_{0}\equiv \infty$ (similarly for $y$) and the quantities $l_{i}(y;x),r_{i}(y;x)$ are modified accordingly. A version of the quantities $r_{i},l_{i}$ first appeared in the work of Borodin-Petrov \cite{Borodin_Petrov_2013}.

Inserting a letter $l\in [n,\bar{n}]$ leads to an attempted right jump of the particle $z^k_{1}$, where $k$ denotes the order of the letter $l$, at the corresponding Gelfand-Tsetlin pattern.

If a particle $z^k_{i}$ attempts a rightward jump, then
\begin{enumerate}[i)]
\item if $i=\frac{k+1}{2}$, with $k$ odd, then with probability $r_{i}(z^{k+1};z^k)$, the jump is performed and $z^{k+1}_{i}$ is pushed to the right. Otherwise, with probability $1-r_{i}(z^{k+1};z^k)$ the jump is suppressed and $z^{k+1}_{i}$ is pulled to the left instead;
\item for all the other particles, the jump is performed and either the particle $z^{k+1}_{i}$ is pushed to the right with probability $r_{i}(z^{k+1};z^k)$ or the particle $z^{k+1}_{i+1}$ is pulled to the right with probability $1-r_{i}(z^{k+1};z^k)$.
\end{enumerate}

If $z^k_{i}$ performs a left jump, then it triggers the leftward move of exactly one of its nearest lower neighbours; the left, $z^{k+1}_{i+1}$, with probability $l_{i}(z^{k+1};z^k)$ and the right, $z^{k+1}_{i}$, with probability $1-l_{i}(z^{k+1};z^k)$.\\

By the description of the algorithm, we see that any transition or attempted transition propagates all the way to the end of the pattern. Multiplying all the probabilities obtained till we reach the bottom of the pattern we obtain a probability distribution over the set $\mathbb{K}^{2n}$ of symplectic Gelfand-Tsetlin patterns with $2n$ levels. Let us denote by $I_{l}(Z,\tilde{Z})$ the probability of obtaining a Gelfand-Tsetlin pattern $\tilde{Z}$ for $Z$  after inserting the letter $l\in [n,\bar{n}]$.\\

The $q$-insertion Berele algorithm can be applied to a word $w=w_{1}...w_{m}\in [n,\bar{n}]^m$ starting from a Gelfand-Tsetlin pattern initialised at the origin, i.e. a Gelfand-Tsetlin pattern with all its coordinates equal to zero, which we denote by $\mathbf{0}$. Successively inserting the letters $w_{1},...,w_{m}$ we obtain a distribution of weights $\phi_{w}(\cdot, \cdot)$ on $\mathbb{K}^{2n}\times \mathcal{O}_{m}(n)$ recursively as follows. Set
\begin{equation*}
\phi_{l}(Z,f)= \left \{
\begin{array}{ll}
1 & \text{if } z^k_{i} = \mathbbm{1}_{\{i=1,k\geq \textit{order}(l)\}} \text{ and }f=(\emptyset, (1))\\
0 & \text{otherwise}
\end{array} \right. 
\end{equation*}
where we recall that $\textit{order}(l)$ denotes the order of the letter $l$ in the alphabet $[n,\bar{n}]$.\\
For $w\in [n,\bar{n}]^m$, $l\in [n,\bar{n}]$ and $(\tilde{Z},\tilde{f} = (f^{0},...,f^{m},f^{m+1}))\in \mathbb{K}^{2n}\times \mathcal{O}_{m+1}(n)$ we set
\[\phi_{wl}(\tilde{Z},\tilde{f})=\sum_{Z \in \mathbb{K}^{2n}}\phi_{w}(Z,f)I_{l}(Z, \tilde{Z})\]
where $f = (f^{0},...,f^{m})$.
\section{Main Results}
Before we present the main results, let us introduce some more notation.

The \textit{q-Pochhammer symbol} is written as $(q;q)_{n}$ and defined via the product
\[(q;q)_{n} = \prod_{k=1}^{n}(1-aq^k), \qquad (q;q)_{\infty} = \prod_{k\geq 1} (1-q^k).\]
The \textit{q-factorial} is written as $n!_{q}$ and is defined as 
\[n!_{q} = \dfrac{(q;q)_{n}}{(1-q)^n}.\]
The \textit{q-binomial coefficients} are defined in terms of q-factorials as follows
\[\dbinom{n}{k}_{q} = \dfrac{n!_{q}}{k!_{q}(n-k)!_{q}}.\]
We finally record some properties of the $q$-binomial coefficient that we will use later
\begin{equation}
\begin{split}
\dbinom{n+1}{k}_{q}=\dbinom{n}{k}_{q}\dfrac{1-q^{n+1}}{1-q^{n-k+1}} \qquad \dbinom{n-1}{k}_{q}=\dbinom{n}{k}_{q}\dfrac{1-q^{n-k}}{1-q^n}\\
\dbinom{n}{k+1}_{q}=\dbinom{n}{k}_{q}\dfrac{1-q^{n-k}}{1-q^{k+1}} \qquad \dbinom{n}{k-1}_{q}=\dbinom{n}{k}_{q}\dfrac{1-q^{k}}{1-q^{n-k+1}}
\end{split}
\label{eq:q_binomial}
\end{equation}

\begin{definition}
The \emph{continuous $q$-Hermite polynomials} are defined for $\ell \in \mathbb{Z}_{\geq 0}$ as 
\[H_{\ell}(x|q)=\sum_{m=0}^{\ell} \dbinom{\ell}{m}_{q}e^{i\theta(2m-\ell)}, \quad x = \cos \theta.\]
\end{definition}
\noindent In this paper we will only work with $a = e^{i \theta} \in \mathbb{R}_{>0}$.\\

We now need to define some special Laurent polynomials that generalise the continuous $q$-Hermite polynomials to higher dimensions. Let $n\geq 1$, then for $a=(a_{1},...,a_{n})\in \mathbb{R}^n_{>0}$ and each symplectic Gelfand-Tsetlin pattern $Z \in \mathbb{K}^{2n}$, we define
\[a^Z = \prod_{i=1}^n a_{i}^{2|z^{2i-1}|-|z^{2i}|-|z^{2i-2}|}\]
and
\begin{equation}
\kappa_{n}(Z) = \mathlarger{\prod}_{k=1}^n\mathlarger{‎‎\prod}_{i=1}^{k-1} \dbinom{z^{2k-1}_{i}-z^{2k-1}_{i+1}}{z^{2k-1}_{i}-z^{2k-2}_{i}}_{q} \dbinom{z^{2k}_{i}-z^{2k}_{i+1}}{z^{2k}_{i}-z^{2k-1}_{i}}_{q} \dbinom{z^{2k}_{k}}{z^{2k}_{k}-z^{2k-1}_{k}}_{q}.
\label{eq:q_weights(Berele)}
\end{equation}
For a partition $\lambda = (\lambda_{1}\geq ... \geq \lambda_{n})$ we consider the function
\begin{equation}
P_{\lambda}^{(n)}(a;q) := \sum_{Z\in \mathbb{K}^{2n}[\lambda]}a^Z\kappa_{n}(Z)
\label{eq:symp_q_whittaker(Berele)}
\end{equation}
where $\mathbb{K}^{2n}[\lambda]$ denotes the set of Gelfand-Tsetlin patterns with shape $z^{2n}=\lambda$.\\

When $q = 0$, $\kappa_{n}(Z) = 1$, for every $Z \in \mathbb{K}^{2n}$, therefore 
\[P_{\lambda}^{(n)}(a;q=0)=\sum_{Z \in \mathbb{K}^{2n}[\lambda]}a^Z = Sp_{\lambda}^{(n)}(a).\]
The last equality follows from Definition \ref{Schur_combinatorial} and the mapping between Gelfand-Tsetlin patterns and Young tableaux described in section 4.

For a Gelfand-Tsetlin pattern $Z\in \mathbb{K}^{2n}$, let $Z^{1:2(n-1)}$ be a Gelfand-Tsetlin pattern consisting of the top $2(n-1)$ levels of $Z$. We then note that the weight $\kappa_{n} (Z)$ decomposes as follows
\begin{equation}
\begin{split}
\kappa_{n}(Z) &= \kappa_{n-1}(Z^{1:2(n-1)})\mathlarger{‎‎\prod}_{i=1}^{n-1} \dbinom{z^{2n-1}_{i}-z^{2n-1}_{i+1}}{z^{2n-1}_{i}-z^{2n-2}_{i}}_{q} \dbinom{z^{2n}_{i}-z^{2n}_{i+1}}{z^{2n}_{i}-z^{2n-1}_{i}}_{q} \dbinom{z^{2n}_{n}}{z^{2n}_{n}-z^{2n-1}_{k}}_{q}\\
& = : \kappa_{n-1}(Z^{1:2(n-1)}) \hat{\kappa}_{n-1}^n(z^{2n-2},z^{2n-1},z^{2n}).
\end{split}
\end{equation}
Therefore, the function $P^{(n)}$ exhibits a recursive structure as follows
\[P_{\lambda}^{(n)}(a;q) = \sum a_{n}^{2|z^{2n-1}|-|z^{2n}|-|z^{2n-1}|}\hat{\kappa}_{n-1}^n(z^{2n-2},z^{2n-1},z^{2n}) P^{n-1}_{z^{2n-2}}(\tilde{a};q)\]
where the summation is over $(z^{2n-2},z^{2n-1},z^{2n})\in \Lambda_{n-1}\times \Lambda_{n} \times \Lambda_{n}$ satisfying $z^{2n-2}\preceq z^{2n-1}\preceq z^{2n}\equiv \lambda$ and $\tilde{a}=(a_{1},...,a_{n-1})$ is the vector consisting of the first $n-1$ coordinates of $a$.\\

Let us consider the kernel operator $L_{n}$ defined as
\begin{equation}
L_{n}(\lambda, \mu) = \left\{ \begin{array}{ll}
u_{n,i}^+(\lambda) & \text{ if } \mu=\lambda + e_{i}, 1\leq i \leq n\\
u_{n,i}^-(\lambda) & \text{ if } \mu =\lambda - e_{i}, 1\leq  i \leq n\\
0 & \text{ otherwise}
\end{array} \right.
\label{eq:Pieri_operator}
\end{equation}
where 
\begin{equation}
u_{n,i}^+(\lambda) = \left\{ \begin{array}{ll}
1-q^{\lambda_{i-1}-\lambda_{i}} & \text{ if } 1< i \leq n\\
1 & \text{ if }i=1
\end{array} \right.
\label{eq:Pieri_right}
\end{equation}
\begin{equation}
u_{n,i}^-(\lambda) = \left\{ \begin{array}{ll}
1-q^{\lambda_{i}-\lambda_{i+1}} & \text{ if } 1\leq i < n\\
1-q^{\lambda_{n}} & \text{ if }i=n
\end{array} \right.
\label{eq:Pieri_left}
\end{equation}
We also define the kernel operators $K_{n}$ and $M_{n}$ by
\[K_{n}(\lambda,Z) = a^Z \kappa_{n}(Z)\mathbbm{1}_{z^{2n}=\lambda}, \qquad M_{n}(Z,\tilde{Z}) = \sum_{l\in [n,\bar{n}]}a_{l}I_{l}(Z,\tilde{Z})\]
where $a_{\bar{l}}=a_{l}^{-1}$ for $1\leq l \leq n$.

The operator $L_{n}$ acts on partitions, whereas the operator $M_{n}$ acts on Young tableaux. The kernel $K_{n}$ can be used as a link between a Young tableau and its shape as follows.
\begin{theorem}
\label{intertwining}
The following intertwining relation holds
\[K_{n}M_{n} = L_{n}K_{n}.\]
\end{theorem}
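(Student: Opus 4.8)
The plan is to verify the identity entrywise. Fix a partition $\lambda\in\Lambda_n$ and a pattern $\tilde Z\in\mathbb{K}^{2n}$. Because $L_n(\lambda,\mu)$ is supported on $\mu=\lambda\pm e_i$ and $K_n(\mu,\tilde Z)\propto\mathbbm{1}_{\tilde z^{2n}=\mu}$, the right-hand side collapses to
\[(L_nK_n)(\lambda,\tilde Z)=a^{\tilde Z}\kappa_n(\tilde Z)\,L_n(\lambda,\tilde z^{2n}),\]
which is nonzero only when $\tilde z^{2n}=\lambda\pm e_i$. On the left-hand side $K_n(\lambda,Z)$ forces $z^{2n}=\lambda$, and since one insertion changes the recorded shape by exactly one box (the oscillating-tableau property, which the $q$-dynamics preserve since the output lies in $\mathcal{O}_{m+1}(n)$), every $\tilde Z$ reachable from such a $Z$ also has $\tilde z^{2n}=\lambda\pm e_i$. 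Thus the supports match and it remains to prove the scalar identity
\[\sum_{Z:\,z^{2n}=\lambda}a^Z\kappa_n(Z)\,M_n(Z,\tilde Z)=a^{\tilde Z}\kappa_n(\tilde Z)\,u_{n,i}^{\pm}(\lambda)\]
for every $\tilde Z$ with $\tilde z^{2n}=\lambda\pm e_i$.

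I would establish this by induction on $n$, peeling off the bottom two levels. Using $a^Z=a^{Z^{1:2(n-1)}}a_n^{2|z^{2n-1}|-|z^{2n}|-|z^{2n-2}|}$ together with $\kappa_n(Z)=\kappa_{n-1}(Z^{1:2(n-1)})\,\hat\kappa_{n-1}^n(z^{2n-2},z^{2n-1},z^{2n})$, the link $K_n$ factorises into the lower-level link $K_{n-1}$ and a top attachment. Correspondingly I would split $M_n=\sum_l a_l I_l$ according to $\mathrm{order}(l)$: a letter of order at most $2n-2$ acts on the top $2(n-1)$ levels exactly as in the $(n-1)$-level algorithm and then \emph{propagates} a single push or pull of some particle $z^{2n-2}_j$ into the bottom pair $(z^{2n-1},z^{2n})$, whereas the two letters $n,\bar n$ (orders $2n-1,2n$) start a jump directly at level $2n-1$ or $2n$. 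After summing the lower-level contributions against $a^{Z^{1:2(n-1)}}\kappa_{n-1}$, they are governed by the inductive hypothesis $K_{n-1}M_{n-1}=L_{n-1}K_{n-1}$, which converts the lower moves into the factor $L_{n-1}(\,\cdot\,,\tilde z^{2n-2})$ times the lower weight at $\tilde Z$.

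The heart of the argument, and the step I expect to be hardest, is the \emph{single new level} relation: conditioning on the move arriving at level $2n-2$ (or on a direct $n/\bar n$ insertion), one must show that summing over the old bottom configurations $(z^{2n-1},z^{2n})$, the product of the transition probabilities $r_i,l_i$ of \eqref{eq:probabilities(Berele)} with the ratio of the attaching weights $a_n^{2|z^{2n-1}|-|z^{2n}|-|z^{2n-2}|}\hat\kappa_{n-1}^n$ before and after the move telescopes, via the $q$-binomial identities \eqref{eq:q_binomial}, to exactly the Pieri factor $u_{n,i}^{\pm}(\lambda)$ of \eqref{eq:Pieri_right}--\eqref{eq:Pieri_left}. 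The definitions of $r_i,l_i$ are tailored for this: each factor $q^{y_i-x_i}(1-q^{x_{i-1}-y_i})/(1-q^{x_{i-1}-x_i})$ is designed to cancel the change of a single $q$-binomial when one coordinate is shifted by one, so that the chain of pushes and pulls running down the bottom pair collapses. The difficulty is bookkeeping: one must treat separately a push versus a pull, the suppression at the odd level $i=\frac{k+1}{2}$, and the boundary conventions $x_{n+1}\equiv0$, $x_0\equiv\infty$ that produce the special factors $1-q^{\lambda_n}$ (for $i=n$) and $1$ (for $i=1$), and then check that the propagated move and the two direct top insertions all yield the same coefficient $u_{n,i}^{\pm}(\lambda)$.

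Finally I would dispatch the base case $n=1$ by direct computation: there $\kappa_1(Z)=\binom{z^2_1}{z^2_1-z^1_1}_q$, a single pair of levels, and the identity reduces to the one-dimensional continuous $q$-Hermite recurrence, while the stochasticity of the weights $r_i,l_i$ and their complements guarantees that no configuration is over- or under-counted. Combining the inductive hypothesis on the top $2(n-1)$ levels with the single-new-level relation on the bottom pair yields the scalar identity above for all $n$, and hence $K_nM_n=L_nK_n$.
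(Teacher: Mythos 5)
Your proposal follows essentially the same route as the paper: the support-matching observation, the factorisation of $K_{n}$ through the bottom three levels, the splitting of $M_{n}$ according to whether the inserted letter lies in $\{n,\bar{n}\}$ or not, the induction on $n$ with the $n=1$ base case done by direct computation, and above all the ``single new level'' relation, which is exactly the paper's Proposition \ref{intertwining_helper} ($\hat{K}_{n}\hat{M}_{n}=L_{n}\hat{K}_{n}$), proved there by precisely the case-by-case $q$-binomial telescoping you describe. The one piece you have not actually carried out is that case analysis over the transitions $\tilde{z}=z\pm e_{i}$ (the bulk of the paper's Section 8), but your identification of what must be checked and of the cancellation mechanism that closes it is accurate.
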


The proof of Theorem \ref{intertwining} can be found in section 8. The above intertwining relation implies that the polynomials $P^{(n)}_{\lambda}(a;q)$ are eigenfunction of the operator $L_{n}$. More specifically the following result holds.
\begin{proposition}\label{eigenrelation}
The following identity holds
\[L_{n}P_{\lambda}^{(n)}(a;q) = \sum_{i=1}^n(a_{i}+a_{i}^{-1})P_{\lambda}^{(n)}(a;q)\]
for all $\lambda \in \Lambda_{n}$ and $a\in \mathbb{R}^n_{>0}$.
\end{proposition}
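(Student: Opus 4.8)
The plan is to read the proposition off directly from the intertwining relation of Theorem \ref{intertwining}, by feeding it the constant function. Write $\mathbf{1}$ for the function on $\mathbb{K}^{2n}$ that is identically equal to $1$, and regard the kernels $K_n, M_n, L_n$ as operators acting on functions in the usual way, so that $(K_n g)(\lambda) = \sum_{Z} K_n(\lambda,Z)\, g(Z)$ and similarly for $M_n$ (acting on functions of Gelfand-Tsetlin patterns) and $L_n$ (acting on functions of partitions).

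First I would record the two elementary facts that make the argument work. The first is that $P^{(n)}$ is simply $K_n$ applied to the constant function: by the definition \eqref{eq:symp_q_whittaker(Berele)} of $P_\lambda^{(n)}$ together with that of $K_n$,
\[
(K_n \mathbf{1})(\lambda) = \sum_{Z\in\mathbb{K}^{2n}} K_n(\lambda,Z) = \sum_{Z\in\mathbb{K}^{2n}[\lambda]} a^Z\kappa_n(Z) = P_\lambda^{(n)}(a;q),
\]
so $P^{(n)} = K_n\mathbf{1}$. The second fact is that $\mathbf{1}$ is an eigenfunction of $M_n$ with eigenvalue $\sum_{i=1}^n(a_i+a_i^{-1})$. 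Here I would use that for each fixed letter $l\in[n,\bar n]$ the insertion kernel $I_l(Z,\cdot)$ is, by its construction in Section 5, a probability distribution on $\mathbb{K}^{2n}$, so that $\sum_{\tilde Z} I_l(Z,\tilde Z) = 1$ for every $Z$. Consequently
\[
(M_n\mathbf{1})(Z) = \sum_{\tilde Z\in\mathbb{K}^{2n}}\ \sum_{l\in[n,\bar n]} a_l\, I_l(Z,\tilde Z) = \sum_{l\in[n,\bar n]} a_l = \sum_{i=1}^n(a_i+a_i^{-1}),
\]
the last equality being the convention $a_{\bar l}=a_l^{-1}$ summed over the alphabet $[n,\bar n]$. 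Hence $M_n\mathbf{1} = \big(\sum_{i=1}^n(a_i+a_i^{-1})\big)\mathbf{1}$.

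With these in hand the proposition follows by applying the intertwining $K_n M_n = L_n K_n$ to $\mathbf{1}$ and evaluating at $\lambda$:
\[
L_n P_\lambda^{(n)} = (L_n K_n \mathbf{1})(\lambda) = (K_n M_n\mathbf{1})(\lambda) = \Big(\sum_{i=1}^n(a_i+a_i^{-1})\Big)(K_n\mathbf{1})(\lambda) = \Big(\sum_{i=1}^n(a_i+a_i^{-1})\Big) P_\lambda^{(n)},
\]
where I have used the eigenrelation for $M_n$ in the middle step and $P^{(n)}=K_n\mathbf{1}$ at both ends.

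The only point requiring genuine justification beyond Theorem \ref{intertwining} is the stochasticity of $I_l$, namely that the local weights $r_i, l_i$ together with their complements $1-r_i, 1-l_i$ are nonnegative and that the product of probabilities accumulated as one processes the pattern level by level normalizes to $1$. This is exactly the content of the construction in Section 5 (each attempted jump is resolved by a single Bernoulli choice, and any transition propagates deterministically in its combinatorial type all the way to the bottom), so I expect this normalization to be the main thing to check with care; it is bookkeeping rather than a new difficulty, and everything else is a one-line consequence of the intertwining relation.
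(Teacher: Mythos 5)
Your proposal is correct and is essentially the paper's own argument: the paper likewise writes $P_\lambda^{(n)}(a;q)=\sum_{Z}K_n(\lambda,Z)$, applies the intertwining relation $K_nM_n=L_nK_n$, and then uses the stochasticity $\sum_{Z}I_l(\tilde Z,Z)=1$ to extract the eigenvalue $\sum_{i=1}^n(a_i+a_i^{-1})$. Your reformulation in terms of the constant function $\mathbf{1}$ is just a cleaner packaging of the same three steps.
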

\begin{proof}
We observe that the function $P^{(n)}_{\lambda}(a;q)$ can be re-written using the kernel $K_{n}$ as follows
\[P_{\lambda}^{(n)}(a;q) =\sum_{Z\in \mathbb{K}^{2n}}a^Z\kappa_{n} (Z)\mathbbm{1}_{z^{2n}=\lambda}= \sum_{Z\in \mathbb{K}^{2n}}K_{n}(\lambda,Z)\]
therefore we have the following
\begin{equation*}
L_{n}P_{\lambda}^{(n)}(a;q) = L_{n} \sum_{Z\in \mathbb{K}^{2n}}K_{n}(\lambda,Z)= \sum_{Z\in \mathbb{K}^{2n}}(L_{n}K_{n})(\lambda,Z)
\end{equation*}
using the intertwining identity from Theorem \ref{intertwining} the last quantity equals
\begin{equation*}
\begin{split}
\sum_{Z\in \mathbb{K}^{2n}}(K_{n}M_{n})(\lambda,Z) & = \sum_{Z\in \mathbb{K}^{2n}} \sum_{\tilde{Z}\in \mathbb{K}^{2n}}K_{n}(\lambda, \tilde{Z})M_{n}(\tilde{Z},Z)\\
&=\sum_{\tilde{Z}\in \mathbb{K}^{2n}}K_{n}(\lambda, \tilde{Z})\sum_{Z\in \mathbb{K}^{2n}} \sum_{l\in [n,\bar{n}]}a_{l}I_{l}(\tilde{Z},Z).
\end{split}
\end{equation*}
We recall that $I_{l}(\tilde{Z},Z)$ denotes the probability of obtaining the Gelfand-Tsetlin pattern $Z$ from $\tilde{Z}$ after inserting the letter $l$, therefore for each $l\in [n,\bar{n}]$ and $\tilde{Z}\in \mathbb{K}^{2n}$, we have
\[\sum_{Z\in \mathbb{K}^{2n}}I_{l}(\tilde{Z},Z)=1\]
and hence we conclude that
\[L_{n}P_{\lambda}^{(n)}(a;q)=\sum_{\tilde{Z}\in \mathbb{K}^{2n}}K_{n}(\lambda, \tilde{Z}) \sum_{l\in [n,\bar{n}]}a_{l}=\sum_{i=1}^n(a_{i}+a_{i}^{-1})P_{\lambda}^{(n)}(a;q)\]
as required.
\end{proof}

The intertwining relation also leads to an expression for the distribution over $\mathbb{K}^{2n}\times \mathcal{O}_{m}(n)$ when we average over all the words of length $m$. 
\begin{theorem}
Let $(Z,f = (f^0,...,f^m))\in \mathbb{K}^{2n}\times \mathcal{O}_{m}(n)$ such that $z^{2n}=f^m$. Then
\begin{equation}
\sum_{w \in [n, \bar{n}]^m}a^{w}\phi_{w}(Z,f) = a^Z\kappa_{n} (Z)\prod_{i=1}^m L_{n}(f^{i-1},f^{i})
\label{eq:weight_simplify}
\end{equation}
where we recall that $a^w = \prod_{i=1}^m a_{w_{i}}$ with $a_{\bar{l}}=a_{l}^{-1}$, for $1\leq l \leq n$.
\end{theorem}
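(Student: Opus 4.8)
The plan is to prove the identity \eqref{eq:weight_simplify} by induction on the word-length $m$, using the recursive definition of the weights $\phi_{w}$ together with the intertwining relation $K_{n}M_{n} = L_{n}K_{n}$ of Theorem \ref{intertwining}. The left-hand side, summed against $a^{w}$ over all words, is exactly the kind of object that the operator $M_{n}(Z,\tilde{Z}) = \sum_{l\in[n,\bar{n}]}a_{l}I_{l}(Z,\tilde{Z})$ is built to handle: inserting a single letter $l$ with multiplicative weight $a_{l}$ and summing over $l$ produces one application of $M_{n}$, while the Pieri-type operator $L_{n}$ records the corresponding transition of the shape in the oscillating tableau. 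The goal is to show that conjugating the insertion dynamics $M_{n}$ by the link kernel $K_{n}$ turns it into the shape dynamics $L_{n}$, which is precisely the content of the intertwining.

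First I would set up the induction. For the base case $m=0$ (or $m=1$, matching the initialisation of $\phi_{l}$) one checks directly that the left-hand side reduces to $a^{Z}\kappa_{n}(Z)$ with the empty product on the right, using the definition of $\phi_{l}(Z,f)$ and $K_{n}(\lambda,Z) = a^{Z}\kappa_{n}(Z)\mathbbm{1}_{z^{2n}=\lambda}$. For the inductive step, write a word of length $m+1$ as $wl$ with $l\in[n,\bar{n}]$, and use the recursion
\[
\phi_{wl}(\tilde{Z},\tilde{f}) = \sum_{Z\in\mathbb{K}^{2n}}\phi_{w}(Z,f)\,I_{l}(Z,\tilde{Z}),
\]
where $f=(f^{0},\dots,f^{m})$ and $\tilde{f}=(f^{0},\dots,f^{m+1})$. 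Multiplying by $a^{wl}=a^{w}a_{l}$ and summing over all words $w$ of length $m$ and over $l\in[n,\bar{n}]$, the sum over $l$ of $a_{l}I_{l}(Z,\tilde{Z})$ collapses to $M_{n}(Z,\tilde{Z})$, so the accumulated weight becomes
\[
\sum_{w\in[n,\bar{n}]^{m}}a^{w}\phi_{w}(Z,f)\;M_{n}(Z,\tilde{Z}),
\]
summed over the intermediate pattern $Z$. Here I would apply the inductive hypothesis to replace $\sum_{w}a^{w}\phi_{w}(Z,f)$ by $a^{Z}\kappa_{n}(Z)\prod_{i=1}^{m}L_{n}(f^{i-1},f^{i}) = K_{n}(f^{m},Z)\prod_{i=1}^{m}L_{n}(f^{i-1},f^{i})$, using that $z^{2n}=f^{m}$ is forced.

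At this stage the inner sum over $Z$ is exactly $\sum_{Z}K_{n}(f^{m},Z)M_{n}(Z,\tilde{Z}) = (K_{n}M_{n})(f^{m},\tilde{Z})$, and the intertwining relation of Theorem \ref{intertwining} lets me rewrite this as $(L_{n}K_{n})(f^{m},\tilde{Z}) = \sum_{\mu}L_{n}(f^{m},\mu)K_{n}(\mu,\tilde{Z})$. Since $K_{n}(\mu,\tilde{Z})$ is supported on $\mu = z^{2n} = f^{m+1}$, only the single term $\mu = f^{m+1}$ survives, contributing the factor $L_{n}(f^{m},f^{m+1})K_{n}(f^{m+1},\tilde{Z}) = L_{n}(f^{m},f^{m+1})\,a^{\tilde{Z}}\kappa_{n}(\tilde{Z})$. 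Combining this with the product $\prod_{i=1}^{m}L_{n}(f^{i-1},f^{i})$ carried from the hypothesis yields exactly $a^{\tilde{Z}}\kappa_{n}(\tilde{Z})\prod_{i=1}^{m+1}L_{n}(f^{i-1},f^{i})$, completing the induction.

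The genuine mathematical content is entirely concentrated in the intertwining $K_{n}M_{n}=L_{n}K_{n}$, whose proof is deferred to section 8; granting it, the argument above is essentially bookkeeping. The main obstacle in writing this cleanly is the careful matching of indices and supports: one must track that the oscillating-tableau entry $f^{m+1}$ is pinned by the indicator in $K_{n}$ to equal $z^{2n}$ of the new pattern, that $L_{n}(f^{m},f^{m+1})$ is nonzero precisely when $f^{m}\rightsquigarrow f^{m+1}$ (so that $\tilde{f}$ is a legitimate oscillating tableau in $\mathcal{O}_{m+1}(n)$), and that the collapse $\sum_{l}a_{l}I_{l}=M_{n}$ and the collapse of the $\mu$-sum both happen at the right level of the pattern. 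Once these supports are lined up, the telescoping of the $L_{n}$ factors into the product $\prod_{i=1}^{m+1}L_{n}(f^{i-1},f^{i})$ is immediate.
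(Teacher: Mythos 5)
Your proposal is correct and follows essentially the same route as the paper: both arguments reduce the sum over words to products of the kernel $M_{n}$ and then telescope via the intertwining relation $K_{n}M_{n}=L_{n}K_{n}$ of Theorem \ref{intertwining}, using the support of $K_{n}$ to pin each intermediate shape to $f^{i}$. The only cosmetic difference is that you formalise the paper's phrase \enquote{applying repeatedly the intertwining relation} as an induction on $m$ that peels off the last letter, whereas the paper unrolls the product starting from the first letter.
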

\begin{proof}
Let us denote by $Z(i)$ the Gelfand-Tsetlin pattern obtained after inserting the $i$-th letter to the corresponding Young tableau. For $(Z,f)\in \mathbb{K}^{2n}\times \mathcal{O}_{m}(n)$ with $f = (f^0,...,f^m)$ and $z^{2n}=f^m$ we have
\begin{equation*}
\begin{split}
\sum_{w \in [n,\bar{n}]^m}&a^w\phi_{w}(Z,f)\\
=& \sum_{w\in [n,\bar{n}]^m}a^w \sum_{\substack{(Z(i))_{i=1}^m: \\ z^{2n}(i)=f^i}}I_{w_{1}}(\mathbf{0},Z(1))\, ... \,I_{w_{m}}(Z(m-1),Z)\\
=& \sum_{\substack{(Z(i))_{i=1}^m: \\ z^{2n}(i)=f^i}} \Big(\sum_{w_{1}\in [n,\bar{n}]}a_{w_{1}}I_{w_{1}}(\mathbf{0},Z(1))\Big) \, ... \, \Big(\sum_{w_{m}\in [n,\bar{n}]}a_{w_{m}}I_{w_{m}}(Z(m-1),Z)\Big)\\
=&\sum_{\substack{(Z(i))_{i=1}^m:\\ z^{2n}(i)=f^i}}M_{n}(\mathbf{0},Z(1)) \, ... \, M_{n}(Z(m-1),Z)
\end{split}
\end{equation*}

Since $Z(1)$ is the Gelfand-Tsetlin pattern obtained after inserting a single letter $l\in [n,\bar{n}]$ to an empty tableau, it holds that
\begin{equation*}
z^k_{j}(1) = \left\{ \begin{array}{ll}
1 & \text{ if } j=1,\, k\geq order(l)\\
0 & \text{ otherwise}
\end{array} \right.
\end{equation*}
therefore 
\[M_{n}(\mathbf{0},Z(1)) = a_{l}=K_{n}(f^1,Z(1)).\]
Thus, for the summation over $Z(1)\in \mathbb{K}^{2n}$ such that $z^{2n}(1)=f^1$ we have that 
\begin{equation*}
\begin{split}
\sum_{\substack{Z(1):\\ z^{2n}(1)=f^1}}M_{n}(Z(1),Z(2))M_{n}(\mathbf{0},Z(1))&=\sum_{\substack{Z(1):\\ z^{2n}(1)=f^1}}M_{n}(Z(1),Z(2))K_{n}(f^1,Z(1))\\
&= L_{n}(f^1,f^2)K_{n}(f^2,Z(2))
\end{split}
\end{equation*}
where for the last equality we used the intertwining relation from Theorem \ref{intertwining} along with the fact that inside the summation for $Z(2)$ we have that $z^{2n}(2)=f^2$. \\
Applying repeatedly the intertwining relation we conclude that
\[\sum_{w\in [n,\bar{n}]^m}a^w \phi_{w}(Z,f)=\prod_{i=1}^{m-1}L_{n}(f^i,f^{i+1})K_{n}(f^m,Z)\]
which concludes the result observing that $L_{n}(\emptyset,f^1)=1$.
\end{proof}

Summing \eqref{eq:weight_simplify} over $Z \in \mathbb{K}^{2n}$ and $f \in \mathcal{O}_{m}(n)$ such that $z^{2n}=f^m$ gives the following
\begin{equation*}
\begin{split}
\sum_{f\in \mathcal{O}_{m}(n)}\prod_{i=1}^m L_{n}(f^{i-1},f^i)\sum_{Z\in \mathbb{K}^{2n}[f^m]}a^Z\kappa_{n} (Z) &= \sum_{w\in [n,\bar{n}]^m}a^w \sum_{\substack{(Z,f)\in \mathbb{K}^{2n}\times \mathcal{O}_{m}(n): \\ z^{2n}=f^m}}\phi_{w}(Z,f)\\
\sum_{f\in \mathcal{O}_{m}(n)}\prod_{i=1}^m L_{n}(f^{i-1},f^i) P_{f^m}^{(n)}(a;q) &= \sum_{w\in [n,\bar{n}]^m}a^w 
\end{split}
\end{equation*}
therefore the following result holds.
\begin{corollary}
\label{q-Littlewood(corollary)}
For $a=(a_{1},...,a_{n}) \in \mathbb{R}_{>0}^n$, let $P^{(n)}_{\lambda}(a;q)$ denote the function defined in   \eqref{eq:symp_q_whittaker(Berele)} and 
\begin{equation}
Q^{\lambda}_{m}(n;q) = \sum_{\substack{f\in \mathcal{O}_{m}(n)\\ f^m=\lambda}}\prod_{i=1}^m L_{n}(f^{i-1},f^i)
\label{q_number_of_oscillating}
\end{equation}
then the following Littlewood-type identity holds
\begin{equation}
(a_{1}+a_{1}^{-1}+...+a_{n}+a_{n}^{-1})^m = \sum_{\lambda \in \Lambda_{n}}Q^{\lambda}_{m}(n;q)P_{\lambda}^{(n)}(a;q).
\label{eq:q-Littlewood}
\end{equation}
\end{corollary}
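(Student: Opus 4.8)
The plan is to start from the identity \eqref{eq:weight_simplify} established in the preceding theorem and to sum both sides over all compatible pairs $(Z,f)$, exploiting the fact that the $q$-insertion dynamics are genuinely stochastic. Fixing the final shape $f^m=\lambda$, I would sum \eqref{eq:weight_simplify} over every $Z\in\mathbb{K}^{2n}$ with $z^{2n}=\lambda$ and every oscillating tableau $f\in\mathcal{O}_m(n)$ ending at $\lambda$. On the left-hand side the factor $\prod_{i=1}^m L_n(f^{i-1},f^i)$ depends only on $f$, so the $Z$-sum isolates as $\sum_{Z\in\mathbb{K}^{2n}[\lambda]}a^Z\kappa_n(Z)$, which by the definition \eqref{eq:symp_q_whittaker(Berele)} equals $P_\lambda^{(n)}(a;q)$. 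After grouping the $f$-sum according to the definition \eqref{q_number_of_oscillating} of $Q^\lambda_m(n;q)$, this produces precisely the summand $Q^\lambda_m(n;q)\,P_\lambda^{(n)}(a;q)$ appearing on the right-hand side of \eqref{eq:q-Littlewood}.

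Next I would treat the right-hand side of \eqref{eq:weight_simplify}. Summing $a^w\phi_w(Z,f)$ over the same set of pairs and interchanging the order of summation gives $\sum_{w\in[n,\bar{n}]^m}a^w\big(\sum_{(Z,f):\,z^{2n}=f^m}\phi_w(Z,f)\big)$. The key point is that for each fixed word $w$ the weights $\phi_w(\cdot,\cdot)$ form a probability distribution on $\mathbb{K}^{2n}\times\mathcal{O}_m(n)$. This follows by induction on the length of $w$ from the recursive definition $\phi_{wl}(\tilde{Z},\tilde{f})=\sum_{Z}\phi_w(Z,f)I_l(Z,\tilde{Z})$ together with the fact, recorded in Section 5, that each single-insertion kernel $I_l(Z,\cdot)$ is itself a probability measure: the jump, push and pull probabilities $r_i,l_i$ and their complements $1-r_i,1-l_i$ sum to one at every site, and these compose to a distribution as the transition propagates to the bottom of the pattern. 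In the inductive step, for fixed $Z$ the new recorded shape $f^{m+1}=\tilde{z}^{2n}$ is a deterministic function of $\tilde{Z}$, so summing over $\tilde{Z}$ (and the induced $f^{m+1}$) reduces to $\sum_{\tilde{Z}}I_l(Z,\tilde{Z})=1$, whence total mass one is preserved. Consequently the inner sum is identically $1$ and the right-hand side collapses to $\sum_{w\in[n,\bar{n}]^m}a^w$.

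Finally I would evaluate this last sum by factorizing over the letters, $\sum_{w\in[n,\bar{n}]^m}a^w=\prod_{i=1}^m\big(\sum_{l\in[n,\bar{n}]}a_l\big)=\big(\sum_{l=1}^n(a_l+a_l^{-1})\big)^m$, using the convention $a_{\bar{l}}=a_l^{-1}$. Equating the two evaluations of the summed identity yields \eqref{eq:q-Littlewood}. I expect no serious obstacle once \eqref{eq:weight_simplify} is in hand, since the remaining computation is essentially bookkeeping; the one point requiring genuine care is the stochasticity of $\phi_w$, that is, verifying that the recording tableau $f$ is pinned down along each insertion trajectory (so that the constraint $z^{2n}=f^m$ is automatic on the support of $\phi_w$) and that summing over admissible $(Z,f)$ therefore recovers total mass exactly one rather than overcounting.
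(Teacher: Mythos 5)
Your proposal is correct and follows essentially the same route as the paper: sum the identity \eqref{eq:weight_simplify} over all compatible pairs $(Z,f)$, recognise $P_\lambda^{(n)}(a;q)$ and $Q_m^\lambda(n;q)$ on one side, and use the stochasticity of $\phi_w$ together with $\sum_{w\in[n,\bar n]^m}a^w=\bigl(\sum_{i=1}^n(a_i+a_i^{-1})\bigr)^m$ on the other. Your explicit inductive verification that $\phi_w$ has total mass one is a detail the paper leaves implicit, but it is the same argument.
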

Since $f \in \mathcal{O}_{m}(n)$ we have that for every $1\leq i \leq m$, $f^{i-1}\setminus f^i =(1)$ or $f^{i}\setminus f^{i-1} =(1)$ therefore there exists $1\leq j \leq n$ such that $L_{n}(f^{i-1},f^i) = u_{n,j}^+(f^{i-1})$ or $u_{n,j}^{-}(f^{i-1})$, therefore as $q\to 0$ we have that $L_{n}(f^{i-1},f^i)\to 1$. Hence as $q\to 0$
\[Q^{\lambda}_{m}(n;q)\to \sum_{\substack{f\in \mathcal{O}_{m}(n):\\f^m=\lambda}}1=Q^{\lambda}_{m}(n)\]
where we recall that $Q^{\lambda}_{m}(n)$ denotes the set of oscillating sequences $f=(f^0,...,f^m)\in \mathcal{O}_{m}(n)$ with $f^m=\lambda$. Therefore, the identity \eqref{eq:q-Littlewood} generalises identity \eqref{eq:Schur_decomposition} for $q \in (0,1)$.

\section{A Markov chain on Gelfand-Tsetlin patterns}

In section 5 we proposed a $q$-deformation of the Berele insertion algorithm on Gelfand-Tsetlin patterns. Let us now assume that the inserted letters are chosen randomly. More specifically, for a word $w=w_{1},...,w_{m}$ we assume that $w_{i}$ is chosen independently of the other letters from $[n,\bar{n}]$ with distribution $\rho$, defined in \eqref{eq:letter_distribution}. The probability of obtaining a pair $(Z,f)\in \mathbb{K}^{2n}\times \mathcal{O}_{m}(n)$ is given by
\begin{equation*}
\begin{split}
\mathbb{P}(Z,f)&=\sum_{w\in [n,\bar{n}]^m}\prod_{i=1}^m\rho(w_{i})\phi_{w}(Z,f)\\
&=\sum_{w\in [n,\bar{n}]^m}\dfrac{a^w}{(\sum_{i=1}^n(a_{i}+a_{i}^{-1}))^m}\phi_{w}(Z,f)
\end{split}
\end{equation*}
where $a^w=\prod_{i=1}^m a_{w_{i}}$ with $a_{\bar{l}}=a_{l}^{-1}$.
Using identity \eqref{eq:weight_simplify} we conclude that
\[\mathbb{P}(Z,f)=\dfrac{1}{(\sum_{i=1}^n(a_{i}+a_{i}^{-1}))^m}a^Z\kappa_{n}(Z)\prod_{i=1}^mL_{n}(f^{i-1},f^{i})\]
and summing over $Z \in \mathbb{K}^{2n}$ with $z^{2n}=f^m$ we obtain a distribution over the sequence of shapes for all the intermediate steps
\[\mathbb{P}(f^1,...,f^m)=\dfrac{1}{(\sum_{i=1}^n(a_{i}+a_{i}^{-1}))^m}P_{f^m}^{(n)}(a;q)\prod_{i=1}^mL_{n}(f^{i-1},f^{i}).\]
We moreover have
\begin{equation*}
\mathbb{P}(f^m|f^1,...,f^{m-1})= \dfrac{1}{\sum_{i=1}^n(a_{i}+a_{i}^{-1})} \dfrac{P_{f^m}^{(n)}(a;q)}{P_{f^{m-1}}^{(n)}(a;q)}L_{n}(f^{m-1},f^{m})
\end{equation*}
therefore the sequence of shapes $\{f^m, m\geq 0\}$ evolves as a Markov chain on $\Lambda_{n}$ with transition kernel
\[\Pi(\mu, \lambda)=\dfrac{1}{\sum_{i=1}^n(a_{i}+a_{i}^{-1})} \dfrac{P_{\lambda}^{(n)}(a;q)}{P_{\mu}^{(n)}(a;q)}L_{n}(\mu,\lambda).\]

We summarise the results of this section to the following theorem which generalises Theorem \ref{shape_evolution} for $q\in (0,1)$.
\begin{theorem}\label{shape_evolution}
When applying the $q$-version of the Berele insertion algorithm to a random word $w_{1},w_{2},...$, where each $w_{i}$ is chosen independently at random from $[n,\bar{n}]$ with distribution $\rho$ defined in \eqref{eq:letter_distribution}, the sequence of tableaux $\{Z(m),m\geq 0\}$ obtained evolves as a Markov chain on $\mathbb{K}^{2n}$ with transition kernel
\[M(Z, \tilde{Z})=\sum_{l\in [n,\bar{n}]}\rho(l)I_{l}(Z,\tilde{Z}).\]
The sequence of shapes $f^m=z^{2n}(m)$ evolves as a Markov chain on $\Lambda_{n}$ with transition kernel
\[\Pi(\mu,\lambda)=\dfrac{1}{\sum_{i=1}^n(a_{i}+a_{i}^{-1})} \dfrac{P_{\lambda}^{(n)}(a;q)}{P_{\mu}^{(n)}(a;q)}L_{n}(\mu,\lambda).\]
The conditional law of $Z(m)$, given $\{f^1,...,f^m;f^m=\lambda\}$ is
\[\mathbb{P}(Z(m)=Z|f^1,...,f^m;f^m=\lambda)=\dfrac{K_{n}(\lambda, Z)}{P_{\lambda}^{(n)}(a;q)}.\]
The distribution of $f^m$ is given by
\[\nu(\lambda):=\mathbb{P}(f^m=\lambda)=\dfrac{1}{(\sum_{i=1}^n(a_{i}+a_{i}^{-1}))^m}P^{(n)}_{\lambda}(a;q)Q_{m}^{\lambda}(n;q).\]
\end{theorem}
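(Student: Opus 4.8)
The plan is to assemble the four assertions from the explicit joint law established immediately above the theorem, namely
\[\mathbb{P}(Z,f)=\frac{1}{(\sum_{i=1}^n(a_{i}+a_{i}^{-1}))^m}a^Z\kappa_{n}(Z)\prod_{i=1}^mL_{n}(f^{i-1},f^{i}),\]
which in turn rests on the weight simplification \eqref{eq:weight_simplify} and hence on the intertwining relation of Theorem \ref{intertwining}. Most of the bookkeeping is already present in the running text preceding the statement, so the proof is largely a matter of organizing marginalizations and conditionings of this formula.

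First I would treat the claim that $\{Z(m),m\geq 0\}$ is a Markov chain on $\mathbb{K}^{2n}$, which is the most elementary part. At step $m$ a fresh letter $w_m$ is drawn from $\rho$ independently of the past, and the pattern is updated by the randomized insertion $I_{w_m}$, whose law depends only on the current pattern $Z(m-1)$. Averaging over the letter gives the one-step kernel
\[M(Z,\tilde{Z})=\sum_{l\in[n,\bar{n}]}\rho(l)I_{l}(Z,\tilde{Z}),\]
and the independence of the $w_m$ yields the Markov property directly.

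Next I would obtain the remaining three parts by marginalizing and conditioning the joint law. Summing $\mathbb{P}(Z,f)$ over $Z\in\mathbb{K}^{2n}$ with $z^{2n}=f^m$ collapses $\sum_{Z}a^Z\kappa_{n}(Z)\mathbbm{1}_{z^{2n}=f^m}$ to $P_{f^m}^{(n)}(a;q)$ by \eqref{eq:symp_q_whittaker(Berele)}, recovering the joint shape law recorded in the text; a further sum over $f^1,\dots,f^{m-1}$ with $f^m=\lambda$ fixed produces $\sum_{f:f^m=\lambda}\prod_i L_{n}(f^{i-1},f^i)=Q_{m}^{\lambda}(n;q)$ by the definition \eqref{q_number_of_oscillating}, which gives the distribution $\nu(\lambda)$. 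For the conditional law I would divide the joint pattern--shape law by the joint shape law; the products $\prod_i L_{n}(f^{i-1},f^i)$ cancel, leaving $a^Z\kappa_{n}(Z)\mathbbm{1}_{z^{2n}=\lambda}/P_{\lambda}^{(n)}(a;q)=K_{n}(\lambda,Z)/P_{\lambda}^{(n)}(a;q)$, which crucially depends on the history only through $f^m=\lambda$. For the shape transition kernel I would compute $\mathbb{P}(f^m\mid f^1,\dots,f^{m-1})$ from the joint shape law, observe that it reduces to the displayed $\Pi(f^{m-1},f^m)$, and note that its dependence on the past is only through $f^{m-1}$, which is exactly the Markov property of the shape.

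I expect the one genuinely non-obvious point to be the Markovianity of the shape: $f^m=z^{2n}(m)$ is a deterministic projection of the pattern chain $Z(m)$, and projections of Markov chains are not Markov in general. What rescues it is that the joint law factorizes through the Pieri-type kernel $L_{n}$, so that the conditional of $f^m$ given the full history telescopes to depend on $f^{m-1}$ alone; this factorization is precisely the content of the intertwining $K_{n}M_{n}=L_{n}K_{n}$ of Theorem \ref{intertwining}. Everything else is routine: matching $\sum_{Z}a^Z\kappa_{n}(Z)$ with $P_{\lambda}^{(n)}$, matching $\sum_{f}\prod_i L_{n}$ with $Q_{m}^{\lambda}(n;q)$, and verifying the cancellation of the $L_{n}$ factors in the conditional law.
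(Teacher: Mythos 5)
Your proposal is correct and follows essentially the same route as the paper: both derive everything from the joint law $\mathbb{P}(Z,f)$ obtained via \eqref{eq:weight_simplify} (hence the intertwining of Theorem \ref{intertwining}), then marginalize over $Z$ to get the joint shape law, condition to obtain $\Pi$ and the conditional law of $Z(m)$, and sum over intermediate shapes to get $\nu(\lambda)$. Your explicit remark that Markovianity of the projected shape process is non-automatic and is rescued by the $L_{n}$-factorization is exactly the point the paper's computation of $\mathbb{P}(f^m\mid f^1,\dots,f^{m-1})$ establishes.
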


\begin{remark}
The quantity $\nu(\lambda)$ defines a probability measure on $\Lambda_{n}$, therefore we obtain the following identity for the function $P_{\lambda}^{(n)}(\cdot;q)$
\[\sum_{\lambda \in \Lambda_{n}}P_{\lambda}^{(n)}(a;q)Q_{m}^{\lambda}(n;q) = \Big(\sum_{i=1}^n(a_{i}+a_{i}^{-1})\Big)^m.\]
\end{remark}
 
\section{Proof of intertwining relation}
In this section we will prove the intertwining relation of Theorem \ref{intertwining}. Let us consider the set $T_{n}=\{(x,y,z)\in \Lambda_{n-1}\times \Lambda_{n} \times \Lambda_{n}: x\preceq y\preceq z\}$ and define the kernel $\hat{K}:\Lambda_{n}\times T_{n} \mapsto \mathbb{R}_{\geq 0}$ as follows
\[\hat{K}_{n}(\tilde{z},(x,y,z))=a_{n}^{2|y|-|x|-|z|}\hat{\kappa}_{n}(x,y,z)\mathbbm{1}_{z = \tilde{z}}\]
where 
\[\hat{\kappa}_{n}(x,y,z) = \prod_{i=1}^{n-1}\dbinom{z_{i}-z_{i+1}}{z_{i}-y_{i}}_{q}\dbinom{y_{i}-y_{i+1}}{y_{i}-x_{i}}_{q}\dbinom{z_{n}}{z_{n}-y_{n}}_{q}.\]
We moreover consider the kernel $\hat{M}_{n}:T_{n}\times T_{n}\mapsto \mathbb{R}_{\geq 0}$ given as follows
\newpage
\begin{table}[ht]
\begin{center}
\scalebox{0.88}{
\begin{tabular}{|l|l|}\hline
$(\tilde{x},\tilde{y},\tilde{z})$ & $\hat{M}_{n}((x,y,z),(\tilde{x},\tilde{y},\tilde{z}))$\\ \hline
$(x+e_{i},y+e_{i},z+e_{i})$, $1\leq i \leq n-1$ & $r_{i}(y;x)r_{i}(z;y)u_{n-1,i}^+(x)$\\
$(x+e_{i},y+e_{i},z+e_{i+1})$, $1\leq i \leq n-1$ & $r_{i}(y;x)(1-r_{i}(z;y))u_{n-1,i}^+(x)$\\
$(x+e_{i},y+e_{i+1},z+e_{i+1})$, $1\leq i \leq n-1$ & $(1-r_{i}(y;x))r_{i+1}(z;y)u_{n-1,i}^+(x)$\\
$(x+e_{i},y+e_{i+1},z+e_{i+2})$, $1\leq i \leq n-2$ & $(1-r_{i}(y;x))(1-r_{i+1}(z;y))u_{n-1,i}^+(x)$\\
$(x+e_{n-1},y,z-e_{n})$ & $(1-r_{n-1}(y;x))(1-r_{n}(z;y))u_{n-1,n-1}^+(x)$\\
$(x,y+e_{1},z+e_{1})$ & $a_{n}r_{1}(z;y)$\\
$(x,y+e_{1},z+e_{2})$ & $a_{n}(1-r_{1}(z;y))$\\
$(x,y,z+e_{1})$ & $a_{n}^{-1}$\\
$(x-e_{i},y-e_{i},z-e_{i})$, $1\leq i \leq n-1$ & $(1-l_{i}(y;x))(1-l_{i}(z;y))u_{n-1,i}^-(x)$\\
$(x-e_{i},y-e_{i},z-e_{i+1})$, $1\leq i \leq n-1$ & $(1-l_{i}(y;x))l_{i}(z;y)u_{n-1,i}^-(x)$\\
$(x-e_{i},y-e_{i+1},z-e_{i+1})$, $1\leq i \leq n-2$ & $l_{i}(y;x)(1-l_{i+1}(z;y))u_{n-1,i}^-(x)$\\
$(x-e_{i},y-e_{i+1},z-e_{i+2})$, $1\leq i \leq n-2$ & $l_{i}(y;x)l_{i+1}(z;y)u_{n-1,i}^-(x)$\\
$(x-e_{n-1},y-e_{n},z-e_{n})$ & $l_{n-1}(y;x)u_{n-1,n-1}^-(x)$\\
\hline
\end{tabular}
}
\end{center}
\label{tab:transitions(Berele)}
\end{table}
\noindent where the probabilities $r_{i}$, and $l_{i}$ are defined in \eqref{eq:probabilities(Berele)} and $u_{n-1,i}^{\pm}$ are as in \eqref{eq:Pieri_right} and \eqref{eq:Pieri_left}.
We moreover assume that $\hat{M}_{n}((x,y,z),(\tilde{x},\tilde{y},\tilde{z}))=0$ for any choice of $(\tilde{x},\tilde{y},\tilde{z})$ not listed above.

We will prove that the kernels $\hat{M}_{n}$ and $L_{n}$, defined in \eqref{eq:Pieri_operator}, are intertwined via the kernel $\hat{K}_{n}$. This relation involves only a part of the Gelfand-Tsetlin pattern and hence proving it will be substantially easier to handle than the general intertwining relation.
\begin{proposition}
\label{intertwining_helper}
The following intertwining relation holds
\[\hat{K}_{n} \hat{M}_{n} = L_{n} \hat{K}_{n}.\]
\end{proposition}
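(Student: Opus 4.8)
The plan is to prove the identity entrywise as an equality of $\Lambda_n\times T_n$ matrices. Because $\hat K_n(\tilde z,(x,y,z))$ carries the factor forcing $z=\tilde z$ and $\hat K_n(\mu,(x',y',z'))$ the factor forcing $z'=\mu$, both matrix products collapse to a single sum: fixing the left index $w\in\Lambda_n$ and the right index $(x',y',z')\in T_n$, the relation $\hat K_n\hat M_n=L_n\hat K_n$ is equivalent to the scalar identity
\[
\sum_{(x,y):(x,y,w)\in T_n}a_n^{2|y|-|x|-|w|}\hat\kappa_n(x,y,w)\,\hat M_n\big((x,y,w),(x',y',z')\big)=L_n(w,z')\,a_n^{2|y'|-|x'|-|z'|}\hat\kappa_n(x',y',z').
\]
Since $L_n(w,z')$ vanishes unless $z'=w+e_j$ or $z'=w-e_j$, and since every row of the table defining $\hat M_n$ shifts its third coordinate by exactly one box, both sides vanish off these configurations, and it suffices to treat the two families $z'=w\pm e_j$ separately, splitting off the boundary indices $j=1$ and $j=n$ where the conventions $x_0\equiv\infty$, $x_{n+1}\equiv 0$, the special term $\binom{z_n}{z_n-y_n}_q$, and the special forms \eqref{eq:Pieri_right}--\eqref{eq:Pieri_left} of $u^+_{n,1}$ and $u^-_{n,n}$ intervene.

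The organizing observation I would exploit is that the cascade underlying $\hat M_n$ is sequential: a box entering the top level $x$ first moves the middle level $y$ and then, through the same $y$-move, moves the bottom level $z$. Correspondingly both $\hat M_n$ and the weight $\hat\kappa_n$ factor along the two interfaces,
\[
\hat M_n\big((x,y,z),(\tilde x,\tilde y,\tilde z)\big)=A\big((x,y),(\tilde x,\tilde y)\big)\,B\big((y,z),(\tilde y,\tilde z)\big),\qquad \hat\kappa_n(x,y,z)=\kappa^A(x,y)\,\kappa^B(y,z),
\]
where $\kappa^A(x,y)=\prod_{i=1}^{n-1}\binom{y_i-y_{i+1}}{y_i-x_i}_q$ collects the $(x,y)$-binomials and the $r_i(y;x),l_i(y;x),u^\pm_{n-1,i}(x)$ factors, while $\kappa^B(y,z)=\prod_{i=1}^{n-1}\binom{z_i-z_{i+1}}{z_i-y_i}_q\binom{z_n}{z_n-y_n}_q$ collects the $(y,z)$-binomials and the $r_i(z;y),l_i(z;y)$ factors, the two halves being glued along the common transition $y\to\tilde y$. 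Likewise $a_n^{2|y|-|x|-|z|}=a_n^{|y|-|x|}a_n^{|y|-|z|}$ splits across the interfaces. This reduces Proposition \ref{intertwining_helper} to two applications of a single Borodin--Petrov-type one-interface intertwining (cf.\ \cite{Borodin_Petrov_2013}): first sum out the lower interface $B$ against $\kappa^B$ to reproduce an $L$-operator acting on $y$, then sum out the upper interface $A$ against $\kappa^A$ to reproduce $L_n$ acting on $z=w$.

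For the concrete verification I would, for each fixed target and prescribed box-move $z'=w\pm e_j$, read the table backwards to list the finitely many sources $(x,y)$ reaching it (generically two distinct $y$-values, each with two admissible $x$-values, so up to four active rows for an interior addition, fewer at the boundary). For each contributing row I first confirm the $a_n$-exponent balances: comparing $2|y|-|x|-|w|$ on the left with $2|y'|-|x'|-|z'|$ on the right, the deficit is exactly the explicit $a_n^{\pm1}$ carried by the rows that move $y$ or $z$ without moving $x$, while the rows shifting all three levels together carry no $a_n$ and need none. Dividing through by the common weight $a_n^{2|y'|-|x'|-|z'|}\hat\kappa_n(x',y',z')$, every ratio $\hat\kappa_n(x,y,w)/\hat\kappa_n(x',y',z')$ becomes a product of elementary $q$-binomial ratios that I would rewrite with the recurrences \eqref{eq:q_binomial}; substituting the closed forms \eqref{eq:probabilities(Berele)} for $r_i,l_i$ then leaves a short sum whose collapse to $u^+_{n,j}(w)$ (addition case) or $u^-_{n,j}(w)$ (deletion case) matches $L_n(w,z')$. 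I expect this collapse to be the main obstacle: it is precisely the assertion that the push/pull probabilities at a single site are correctly normalised once weighted by the $q$-binomials, and while no individual identity is deep, the multi-term cancellation together with the exhaustive matching of the degenerate boundary cases ($j=1$, $j=n$, coincident parts) must be carried out carefully and consistently to cover all of $T_n$.
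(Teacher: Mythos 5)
Your setup coincides with the paper's own proof: both reduce the operator identity to the scalar identity you display (using that $\hat K_n$ is supported on $\{z=\tilde z\}$), observe that both sides vanish unless the bottom level moves by a single box, and then propose to verify the surviving cases $z'=w\pm e_j$ by listing the finitely many sources $(x,y)$ (four for an interior move, three for $j=1$ in the addition case, one for $j=1$ in the deletion case), checking the $a_n$-exponents, and rewriting the ratios $\hat\kappa_n(x,y,w)/\hat\kappa_n(x',y',z')$ with the recurrences \eqref{eq:q_binomial}. Your bookkeeping is accurate and matches the paper's enumeration. The genuine gap is that the proof stops exactly where the content begins: the collapse of each three- or four-term sum of $q$-rational expressions to $u^{+}_{n,j}(w)$ or $u^{-}_{n,j}(w)$ \emph{is} the proposition, and you explicitly defer it as "the main obstacle" to be "carried out carefully". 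For instance, in the case $\tilde z=z+e_1$ one must actually show that the three factors $\tfrac{1-q^{\tilde z_1-\tilde y_1}}{1-q^{\tilde z_1-\tilde z_2}}$, $q^{\tilde z_1-\tilde y_1}\tfrac{(1-q^{\tilde y_1-\tilde x_1})(1-q^{\tilde y_1-\tilde z_2})}{(1-q^{\tilde y_1-\tilde y_2})(1-q^{\tilde z_1-\tilde z_2})}$ and $q^{\tilde z_1-\tilde x_1}\tfrac{(1-q^{\tilde x_1-\tilde y_2})(1-q^{\tilde y_1-\tilde z_2})}{(1-q^{\tilde y_1-\tilde y_2})(1-q^{\tilde z_1-\tilde z_2})}$ sum to $1=u^{+}_{n,1}(\tilde z-e_1)$, and similarly for the four-term interior cases and the deletion cases. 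Without these verifications the argument is a correct plan rather than a proof.

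Separately, the proposed shortcut of factoring $\hat M_n$ and $\hat\kappa_n$ across the two interfaces and invoking a one-interface intertwining twice is not as immediate as you suggest. The cancellation row $(x+e_{n-1},y,z-e_n)$ leaves the middle level fixed while moving the bottom level, so the intermediate kernel obtained by summing out the top interface cannot be of the form \eqref{eq:Pieri_operator}: it must carry a term recording a \emph{suppressed} jump of $y_n$ whose resolution (push $z_n$ right versus pull it left) depends on the bottom level. Moreover the direct insertions of $n$ and $\bar n$ enter at different interfaces ($a_n$ at the middle level, $a_n^{-1}$ at the bottom), so the two one-interface statements would not be instances of a single lemma and would each need their own verification — i.e., the factorization reorganises the case analysis rather than discharging it. (This layered structure is essentially how the paper passes from $K_{n-1}M_{n-1}=L_{n-1}K_{n-1}$ to $K_nM_n=L_nK_n$ in the proof of Theorem \ref{intertwining}, with Proposition \ref{intertwining_helper} serving precisely as the irreducible two-interface block that is checked by hand.)
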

\begin{proof}
Since $\hat{K}_{n}$ is supported on $\{z=\tilde{z}\}$ we have that
\begin{equation}
(\hat{K}_{n}\hat{M}_{n})(z,(\tilde{x},\tilde{y},\tilde{z}))=\sum_{\substack{(x,y)\in \Lambda_{n-1}\times \Lambda_{n}:\\ (x,y,z)\in T_{n}}} \hat{K}_{n}(z,(x,y,z)) \hat{M}_{n}((x,y,z),(\tilde{x},\tilde{y},\tilde{z}))
\label{eq:intert_lhs}
\end{equation}
and
\begin{equation}
(L_{n}\hat{K}_{n})(z,(\tilde{x},\tilde{y},\tilde{z}))=L_{n}(z,\tilde{z})\hat{K}_{n}(\tilde{z},(\tilde{x},\tilde{y},\tilde{z})).
\label{eq:intert_rhs}
\end{equation}
We observe that both sides of the intertwining relation vanish unless $\tilde{z}=z \pm e_{i}$ for some $1\leq i \leq n$ hence we only need to confirm the equality of the two sides of the intertwining relation for $z \in \Lambda_{n}$, $(\tilde{x},\tilde{y},\tilde{z})\in T_{n}$ such that $\tilde{z}=z\pm e_{i}$, for $1\leq i \leq n$.

Let us start with the case $\tilde{z}=z+e_{1}$ for which we provide full details of the calculations. The other cases can be checked in a similar manner. In this case the summation in \eqref{eq:intert_lhs} consists of the following terms.\\

\noindent \textbf{I.} If $(x,y,z) = (\tilde{x},\tilde{y},\tilde{z}-e_{1})$ then
\begin{equation*}
\dfrac{\hat{\kappa}_{n}(\tilde{x},\tilde{y},\tilde{z}-e_{1})}{\hat{\kappa}_{n}(\tilde{x},\tilde{y},\tilde{z})} =  \dfrac{\dbinom{(\tilde{z}_{1}-1)-\tilde{z}_{2}}{(\tilde{z}_{1}-1)-\tilde{y}_{1}}_{q}}{\dbinom{\tilde{z}_{1}-\tilde{z}_{2}}{\tilde{z}_{1}-\tilde{y}_{1}}_{q}}=\dfrac{1-q^{\tilde{z}_{1}-\tilde{y}_{1}}}{1-q^{\tilde{z}_{1}-\tilde{z}_{2}}}
\end{equation*}
where we used the property of the $q$-binomial coefficient
\[\binom{n-1}{k-1}_{q}=\dfrac{(q;q)_{n-1}}{(q;q)_{k-1}(q;q)_{n-k}}=\binom{n}{k}_{q}\dfrac{1-q^{k}}{1-q^n}.\]
We then have the following
\begin{equation*}
\begin{split}
\hat{K}_{n}(\tilde{z}-e_{1},(\tilde{x}&,\tilde{y},\tilde{z}-e_{1}))\hat{M}_{n}((\tilde{x},\tilde{y},\tilde{z}-e_{1}),(\tilde{x},\tilde{y},\tilde{z}))\\
=&a_{n}^{2|\tilde{y}|-|\tilde{x}|-|\tilde{z}-e_{1}|}\hat{\kappa}_{n}(\tilde{x},\tilde{y}-e_{1},\tilde{z}-e_{1})a_{n}^{-1}\\
=&a_{n}^{2|\tilde{y}|-|\tilde{x}|-|\tilde{z}|}\hat{\kappa}_{n}(\tilde{x},\tilde{y},\tilde{z})\dfrac{1-q^{\tilde{z}_{1}-\tilde{y}_{1}}}{1-q^{\tilde{z}_{1}-\tilde{z}_{2}}}.
\end{split}
\end{equation*}

\noindent \textbf{II.} If $(x,y,z) = (\tilde{x},\tilde{y}-e_{1},\tilde{z}-e_{1})$ then
\begin{equation*}
\begin{split}
\dfrac{\hat{\kappa}_{n}(\tilde{x},\tilde{y}-e_{1},\tilde{z}-e_{1})}{\hat{\kappa}_{n}(\tilde{x},\tilde{y},\tilde{z})}& =  \dfrac{\dbinom{(\tilde{y}_{1}-1)-\tilde{y}_{2}}{(\tilde{y}_{1}-1)-\tilde{x}_{1}}_{q}}{\dbinom{\tilde{y}_{1}-\tilde{y}_{2}}{\tilde{y}_{1}-\tilde{x}_{1}}_{q}}\dfrac{\dbinom{(\tilde{z}_{1}-1)-\tilde{z}_{2}}{(\tilde{z}_{1}-1)-(\tilde{y}_{1}-1)}_{q}}{\dbinom{\tilde{z}_{1}-\tilde{z}_{2}}{\tilde{z}_{1}-\tilde{y}_{1}}_{q}}\\
&= \dfrac{1-q^{\tilde{y}_{1}-\tilde{x}_{1}}}{1-q^{\tilde{y}_{1}-\tilde{y}_{2}}}\dfrac{1-q^{\tilde{y}_{1}-\tilde{z}_{2}}}{1-q^{\tilde{z}_{1}-\tilde{z}_{2}}}.
\end{split}
\end{equation*}
We then have the following
\begin{equation*}
\begin{split}
\hat{K}_{n}(\tilde{z}-e_{1},(\tilde{x}&,\tilde{y}-e_{1},\tilde{z}-e_{1}))\hat{M}_{n}((\tilde{x},\tilde{y}-e_{1},\tilde{z}-e_{1}),(\tilde{x},\tilde{y},\tilde{z}))\\
=&a_{n}^{2|\tilde{y}-e_{1}|-|\tilde{x}|-|\tilde{z}-e_{1}|}\hat{\kappa}_{n}(\tilde{x},\tilde{y}-e_{1},\tilde{z}-e_{1})a_{n}r_{1}(\tilde{z}-e_{1};\tilde{y}-e_{1})\\
=&a_{n}^{2|\tilde{y}|-|\tilde{x}|-|\tilde{z}|}\hat{\kappa}_{n}(\tilde{x},\tilde{y},\tilde{z})\dfrac{1-q^{\tilde{y}_{1}-\tilde{x}_{1}}}{1-q^{\tilde{y}_{1}-\tilde{y}_{2}}}\dfrac{1-q^{\tilde{y}_{1}-\tilde{z}_{2}}}{1-q^{\tilde{z}_{1}-\tilde{z}_{2}}}q^{\tilde{z}_{1}-\tilde{y}_{1}}.
\end{split}
\end{equation*}

\noindent \textbf{III.} If $(x,y,z) = (\tilde{x}-e_{1},\tilde{y}-e_{1},\tilde{z}-e_{1})$ then
\begin{equation*}
\begin{split}
\dfrac{\hat{\kappa}_{n}(\tilde{x}-e_{1},\tilde{y}-e_{1},\tilde{z}-e_{1})}{\hat{\kappa}_{n}(\tilde{x},\tilde{y},\tilde{z})}& =  \dfrac{\dbinom{(\tilde{y}_{1}-1)-\tilde{y}_{2}}{(\tilde{y}_{1}-1)-(\tilde{x}_{1}-1)}_{q}}{\dbinom{\tilde{y}_{1}-\tilde{y}_{2}}{\tilde{y}_{1}-\tilde{x}_{1}}_{q}}\dfrac{\dbinom{(\tilde{z}_{1}-1)-\tilde{z}_{2}}{(\tilde{z}_{1}-1)-(\tilde{y}_{1}-1)}_{q}}{\dbinom{\tilde{z}_{1}-\tilde{z}_{2}}{\tilde{z}_{1}-\tilde{y}_{1}}_{q}}\\
&= \dfrac{1-q^{\tilde{x}_{1}-\tilde{y}_{2}}}{1-q^{\tilde{y}_{1}-\tilde{y}_{2}}}\dfrac{1-q^{\tilde{y}_{1}-\tilde{z}_{2}}}{1-q^{\tilde{z}_{1}-\tilde{z}_{2}}}.
\end{split}
\end{equation*}
Therefore we conclude that 
\begin{equation*}
\begin{split}
\hat{K}_{n}(\tilde{z}-e_{1},(\tilde{x}-e_{1},\tilde{y}-&e_{1},\tilde{z}-e_{1}))\hat{M}_{n}((\tilde{x}-e_{1},\tilde{y}-e_{1},\tilde{z}-e_{1}),(\tilde{x},\tilde{y},\tilde{z}))\\
=&a_{n}^{2|\tilde{y}-e_{1}|-|\tilde{x}-e_{1}|-|\tilde{z}-e_{1}|}\hat{\kappa}_{n}(\tilde{x}-e_{1},\tilde{y}-e_{1},\tilde{z}-e_{1})\\
 &\times r_{1}(\tilde{y}-e_{1};\tilde{x}-e_{1})r_{1}(\tilde{z}-e_{1};\tilde{y}-e_{1})u_{n,1}^+(\tilde{x}-e_{1})\\
=&a_{n}^{2|\tilde{y}|-|\tilde{x}|-|\tilde{z}|}\hat{\kappa}_{n}(\tilde{x},\tilde{y},\tilde{z})\dfrac{1-q^{\tilde{x}_{1}-\tilde{y}_{2}}}{1-q^{\tilde{y}_{1}-\tilde{y}_{2}}}\dfrac{1-q^{\tilde{y}_{1}-\tilde{z}_{2}}}{1-q^{\tilde{z}_{1}-\tilde{z}_{2}}}q^{\tilde{z}_{1}-\tilde{x}_{1}}.
\end{split}
\end{equation*}
Adding the three terms together we conclude that
\[(\hat{K}_{n}\hat{M}_{n})(\tilde{z}-e_{1},(\tilde{x},\tilde{y},\tilde{z}))=\hat{K}_{n}(\tilde{z},(\tilde{x},\tilde{y},\tilde{z}))\]
which, since $L_{n}(\tilde{z}-e_{1},\tilde{z})=u_{n,1}^+(\tilde{z}-e_{1})=1$, equals $(L_{n}\hat{K}_{n})(\tilde{z}-e_{1},(\tilde{x},\tilde{y},\tilde{z}))$. Therefore for this case the two sides of the intertwining relation are equal.\\

If $\tilde{z} = z-e_{1}$ then the summation in \eqref{eq:intert_lhs} consists of a single term corresponding to $(x,y,z)=(\tilde{x}+e_{1},\tilde{y}+e_{1},\tilde{z}+e_{1})$ and equals
\begin{equation*}
\begin{split}
\hat{K}_{n}(\tilde{z}+e_{1},&(\tilde{x}+e_{1},\tilde{y}+e_{1},\tilde{z}+e_{1}))\hat{M}_{n}((\tilde{x}+e_{1},\tilde{y}+e_{1},\tilde{z}+e_{1}),(\tilde{x},\tilde{y},\tilde{z}))\\
=&a_{n}^{2|\tilde{y}+e_{1}|-|\tilde{x}+e_{1}|-|\tilde{z}+e_{1}|}\hat{\kappa}_{n}(\tilde{x}+e_{1},\tilde{y}+e_{1},\tilde{z}+e_{1})\\
 &\quad \times (1-l_{1}(\tilde{y}+e_{1};\tilde{x}+e_{1}))(1-l_{1}(\tilde{z}+e_{1};\tilde{y}+e_{1}))u_{n,1}^-(\tilde{x}+e_{1})\\
=&a_{n}^{2|\tilde{y}|-|\tilde{x}|-|\tilde{z}|}\hat{\kappa}_{n}(\tilde{x},\tilde{y},\tilde{z})(1-q^{\tilde{z}_{1}-\tilde{z}_{2}+1}).
\end{split}
\end{equation*}
The last quantity equals $u_{n,1}^-(\tilde{z}+e_{1})\hat{K}_{n}(\tilde{z},(\tilde{x},\tilde{y},\tilde{z}))$ which gives \eqref{eq:intert_rhs} for $\tilde{z}=z-e_{1}$, as required.\\

If $\tilde{z}=z+e_{i}$, with $2\leq i \leq n$, the right-hand side of \eqref{eq:intert_lhs} of terms corresponding to the following choices for $(x,y,z)$
\begin{enumerate}[i)]
\item 
	\begin{enumerate}[a.]
	\item$(x,y,z)=(\tilde{x}-e_{i-2},\tilde{y}-e_{i-1},\tilde{z}-e_{i})$ if $i>2$,
	\item $(x,y,z)=(\tilde{x},\tilde{y}-e_{1},\tilde{z}-e_{2})$ if $i=2$;
	\end{enumerate}
\item $(x,y,z)=(\tilde{x}-e_{i-1},\tilde{y}-e_{i-1},\tilde{z}-e_{i})$;
\item $(x,y,z)=(\tilde{x}-e_{i-1},\tilde{y}-e_{i},\tilde{z}-e_{i})$;
\item $(x,y,z)=(\tilde{x}-e_{i},\tilde{y}-e_{i},\tilde{z}-e_{i})$ for $i<n$.
\end{enumerate}
The corresponding terms are
\begin{enumerate}[i)]
\item $a_{n}^{2|\tilde{y}|-|\tilde{x}|-|\tilde{z}|}\hat{\kappa}_{n}(\tilde{x},\tilde{y},\tilde{z})(1-q^{\tilde{z}_{i-1}-\tilde{z}_{i}+1})\dfrac{(1-q^{\tilde{y}_{i-1}-\tilde{x}_{i-1}})(1-q^{\tilde{z}_{i}-\tilde{y}_{i}})}{(1-q^{\tilde{z}_{i}-\tilde{z}_{i+1}})(1-q^{\tilde{y}_{i-1}-\tilde{y}_{i}})}$;
\item $a_{n}^{2|\tilde{y}|-|\tilde{x}|-|\tilde{z}|}\hat{\kappa}_{n}(\tilde{x},\tilde{y},\tilde{z})(1-q^{\tilde{z}_{i-1}-\tilde{z}_{i}+1})\dfrac{q^{\tilde{y}_{i-1}-\tilde{x}_{i-1}}(1-q^{\tilde{x}_{i-1}-\tilde{y}_{i}})(1-q^{\tilde{z}_{i}-\tilde{y}_{i}})}{(1-q^{\tilde{z}_{i}-\tilde{z}_{i+1}})(1-q^{\tilde{y}_{i-1}-\tilde{y}_{i}})}$;
\item $a_{n}^{2|\tilde{y}|-|\tilde{x}|-|\tilde{z}|}\hat{\kappa}_{n}(\tilde{x},\tilde{y},\tilde{z})(1-q^{\tilde{z}_{i-1}-\tilde{z}_{i}+1})\dfrac{q^{\tilde{z}_{i}-\tilde{y}_{i}}(1-q^{\tilde{y}_{i}-\tilde{x}_{i}})(1-q^{\tilde{y}_{i}-\tilde{z}_{i+1}})}{(1-q^{\tilde{z}_{i}-\tilde{z}_{i+1}})(1-q^{\tilde{y}_{i}-\tilde{y}_{i+1}})}$;
\item $a_{n}^{2|\tilde{y}|-|\tilde{x}|-|\tilde{z}|}\hat{\kappa}_{n}(\tilde{x},\tilde{y},\tilde{z})(1-q^{\tilde{z}_{i-1}-\tilde{z}_{i}+1})\dfrac{q^{\tilde{z}_{i}-\tilde{x}_{i}}(1-q^{\tilde{y}_{i}-\tilde{z}_{i+1}})(1-q^{\tilde{x}_{i}-\tilde{y}_{i+1}})}{(1-q^{\tilde{z}_{i}-\tilde{z}_{i+1}})(1-q^{\tilde{y}_{i}-\tilde{y}_{i+1}})}$.
\end{enumerate}
Gathering all the terms together we conclude that if $\tilde{z}=z+e_{i}$, then \eqref{eq:intert_lhs} equals $(1-q^{\tilde{z}_{i-1}-\tilde{z}_{i}+1})\hat{K}_{n}(\tilde{z},(\tilde{x},\tilde{y},\tilde{z})) = (L_{n}\hat{K}_{n})(\tilde{z}-e_{i},(\tilde{x},\tilde{y},\tilde{z}))$.\\

Finally, we conclude the proof of the Proposition confirming the intertwining relation for $\tilde{z}=z-e_{i}$, for some $2\leq i \leq n$. In this case, we have the contribution of four terms corresponding to the following 
\begin{enumerate}[i)]
\item $(x,y,z)=(\tilde{x}+e_{i-2},\tilde{y}+e_{i-1},\tilde{z}+e_{i})$, for $i>2$;
\item $(x,y,z)=(\tilde{x}+e_{i-1},\tilde{y}+e_{i-1},\tilde{z}+e_{i})$;
\item $(x,y,z)=(\tilde{x}+e_{i-1},\tilde{y}+e_{i},\tilde{z}+e_{i})$;
\item \begin{enumerate}[a)] \item $(x,y,z)=(\tilde{x}+e_{i},\tilde{y}+e_{i},\tilde{z}+e_{i})$, if $i<n$ \item $(x,y,z)=(\tilde{x}-e_{n-1},\tilde{y},\tilde{z}+e_{n})$, if $i=n$.\end{enumerate}
\end{enumerate}
The corresponding terms are given by
\begin{enumerate}[i)]
\item $a_{n}^{2|\tilde{y}|-|\tilde{x}|-|\tilde{z}|}\hat{\kappa}_{n}(\tilde{x},\tilde{y},\tilde{z})(1-q^{\tilde{z}_{i}-\tilde{z}_{i+1}+1})\dfrac{q^{\tilde{x}_{i-2}-\tilde{z}_{i}}(1-q^{y_{i-2}'-\tilde{x}_{i-2}})(1-q^{\tilde{z}_{i-1}-\tilde{y}_{i-1}})}{(1-q^{\tilde{z}_{i-1}-\tilde{z}_{i}})(1-q^{y_{i-2}'-\tilde{y}_{i-1}})}$;
\item $a_{n}^{2|\tilde{y}|-|\tilde{x}|-|\tilde{z}|}\hat{\kappa}_{n}(\tilde{x},\tilde{y},\tilde{z})(1-q^{\tilde{z}_{i}-\tilde{z}_{i+1}+1})\dfrac{q^{\tilde{y}_{i-1}-\tilde{z}_{i}}(1-q^{\tilde{x}_{i-2}-\tilde{y}_{i-1}})(1-q^{\tilde{z}_{i-1}-\tilde{y}_{i-1}})}{(1-q^{\tilde{z}_{i-1}-\tilde{z}_{i}})(1-q^{y_{i-2}'-\tilde{y}_{i-1}})}$;
\item $a_{n}^{2|\tilde{y}|-|\tilde{x}|-|\tilde{z}|}\hat{\kappa}_{n}(\tilde{x},\tilde{y},\tilde{z})(1-q^{\tilde{z}_{i}-\tilde{z}_{i+1}+1})\dfrac{q^{\tilde{x}_{i-1}-\tilde{y}_{i}}(1-q^{\tilde{y}_{i-1}-\tilde{x}_{i-1}})(1-q^{\tilde{y}_{i-1}-\tilde{z}_{i}})}{(1-q^{\tilde{z}_{i-1}-\tilde{z}_{i}})(1-q^{\tilde{y}_{i-1}-\tilde{y}_{i}})}$;
\item $a_{n}^{2|\tilde{y}|-|\tilde{x}|-|\tilde{z}|}\hat{\kappa}_{n}(\tilde{x},\tilde{y},\tilde{z})(1-q^{\tilde{z}_{i}-\tilde{z}_{i+1}+1})\dfrac{(1-q^{\tilde{y}_{i-1}-\tilde{z}_{i}})(1-q^{\tilde{x}_{i-1}-\tilde{y}_{i}})}{(1-q^{\tilde{z}_{i-1}-\tilde{z}_{i}})(1-q^{\tilde{y}_{i-1}-\tilde{y}_{i}})}$.
\end{enumerate}
Adding the four terms together we conclude that if $\tilde{z}=z-e_{i}$, for $2\leq i \leq n$, then the right-hand side of \eqref{eq:intert_lhs} equals $(1-q^{\tilde{z}_{i}-\tilde{z}_{i+1}+1})\hat{K}_{n}(\tilde{z},(\tilde{x},\tilde{y},\tilde{z}))=(L_{n}\hat{K}_{n})(\tilde{z}+e_{i},(\tilde{x},\tilde{y},\tilde{z}))$.
\end{proof}

Let us now proceed to the proof of Theorem \ref{intertwining}. We will prove the result by induction on $n$. \\

\noindent \textbf{\underline{Base Case:}} We start by establishing the result for $n=1$. For two Gelfand-Tsetlin patterns $Z,\tilde{Z}\in \mathbb{K}^2$ let us write $z^1_{1}=x$, $z^{2}_{1}=y$ and $\tilde{z}^1_{1}=\tilde{x}$, $\tilde{z}^2_{1}=\tilde{y}$. Then the kernels $K_{1}$ and $M_{1}$ are as follows
\[K_{1}(\lambda, Z)=a_{1}^{2x-y}\dbinom{y}{y-x}_{q}\mathbbm{1}_{y=\lambda}\]
and
\begin{equation*}
M_{1}((x,y),(\tilde{x},\tilde{y})) = \left\{ \begin{array}{ll}
a_{1} r_{1}(y;x) & \text{ if }(\tilde{x},\tilde{y})=(x+1,y+1)\\
a_{1}(1- r_{1}(y;x)) & \text{ if }(\tilde{x},\tilde{y})=(x,y-1)\\
a_{1}^{-1} & \text{ if }(\tilde{x},\tilde{y})=(x,y+1)\\
0 & \text{ otherwise}
\end{array} \right. .
\end{equation*}
We then need to prove, for every $y\in \mathbb{Z}_{\geq 0}$ and $(\tilde{x},\tilde{y}) \in T_{1}:= \{\tilde{x},\tilde{y}\in \mathbb{Z}_{\geq 0}: \tilde{x}\leq \tilde{y}\}$, the following identity
\[(K_{1}M_{1})(y,(\tilde{x},\tilde{y}))=(L_{1}K_{1})(\tilde{x},\tilde{y})\]
which simplifies to
\begin{equation}
\sum_{\substack{x \in \mathbb{Z}_{\geq 0}:\\ (x,y)\in T_{1}}}K_{1}(y,(x,y))M_{1}((x,y),(\tilde{x},\tilde{y})) = L_{1}(y,\tilde{y})K_{1}(\tilde{y},(\tilde{x},\tilde{y}))
\label{eq:intert_base}
\end{equation}

We observe that both sides of \eqref{eq:intert_base} are equal to zero, unless $y = \tilde{y}\pm 1$, therefore we need to confirm the intertwining for every $(\tilde{x},\tilde{y}) \in T_{1}$ and $y = \tilde{y}\pm 1$.\\

Starting with the case $y = \tilde{y}- 1$ we have two non-zero terms. Using the properties of the $q$-binomial coefficient, we recorded in \eqref{eq:q_binomial}, we calculate the contribution of each term to the sum at the left-hand side of \eqref{eq:intert_base}. \\
\textbf{I.} if $(x,y) = (\tilde{x},\tilde{y}-1)$ then
\begin{equation*}
\begin{split}
K_{1}(\tilde{y}-1,(\tilde{x},\tilde{y}-1))M_{1}(&(\tilde{x},\tilde{y}-1),(\tilde{x},\tilde{y}))\\
&=a_{1}^{2\tilde{x}-\tilde{y}+1}\dbinom{\tilde{y}-1}{\tilde{y}-\tilde{x}-1}_{q}a_{1}^{-1}\\
&=a_{1}^{2\tilde{x}-\tilde{y}}\dbinom{\tilde{y}}{\tilde{y}-\tilde{x}}_{q}\dfrac{1-q^{\tilde{y}-\tilde{x}}}{1-q^{\tilde{y}}}\\
&=K_{1}(\tilde{y},(\tilde{x},\tilde{y}))\dfrac{1-q^{\tilde{y}-\tilde{x}}}{1-q^{\tilde{y}}}.
\end{split}
\end{equation*}
\textbf{II.} if $(x,y) = (\tilde{x}-1,\tilde{y}-1)$ we have
\begin{equation*}
\begin{split}
K_{1}(\tilde{y}-1,(\tilde{x}-1,\tilde{y}-1)&)M_{1}((\tilde{x}-1,\tilde{y}-1),(\tilde{x},\tilde{y}))\\
&=a_{1}^{2\tilde{x}-\tilde{y}-1}\dbinom{\tilde{y}-1}{\tilde{y}-\tilde{x}}_{q}a_{1}(1-r_{1}(\tilde{y}-1;\tilde{x}-1))\\
&=a_{1}^{2\tilde{x}-\tilde{y}}\dbinom{\tilde{y}}{\tilde{y}-\tilde{x}}_{q}\dfrac{(1-q^{\tilde{x}})q^{\tilde{y}-\tilde{x}}}{1-q^{\tilde{y}}}\\
&=K_{1}(\tilde{y},(\tilde{x},\tilde{y}))\dfrac{(1-q^{\tilde{x}})q^{\tilde{y}-\tilde{x}}}{1-q^{\tilde{y}}}.
\end{split}
\end{equation*}
These two cases together sum up to $K_{1}(\tilde{y},(\tilde{x},\tilde{y}))$ which, since $L_{1}(\tilde{y}-1,\tilde{y})=u_{1,1}^+(\tilde{y}-1) = 1$, equals the right-hand side of \eqref{eq:intert_base}.\\

For the case $y=\tilde{y}+1$. The left-hand side of \eqref{eq:intert_base} involves a single term corresponding to $(x,y)= (\tilde{x},\tilde{y}+1)$ which equals
\[a_{1}^{2\tilde{x}-\tilde{y}-1}\dbinom{\tilde{y}+1}{\tilde{y}-\tilde{x}+1}_{q}a_{1}(1-q^{\tilde{y}-\tilde{x}+1})=K_{1}(\tilde{y},(\tilde{x},\tilde{y}))(1-q^{\tilde{y}+1})\]
where the last quantity gives the right-hand side of \eqref{eq:intert_base} for $y= \tilde{y}+1$.\\

\noindent \textbf{\underline{General Case:}} 
In order to avoid heavy notation we will write $(x,y,z)$ instead of $(z^{2n-2},z^{2n-1},z^{2n})$ for the bottom three levels of a Gelfand-Tsetlin pattern $Z\in \mathbb{K}^{2n}$.\\
Let us assume that the intertwining relation holds for $n-1$, i.e.
\[K_{n-1}M_{n-1}=L_{n-1}K_{n-1}.\]
We observe that the kernel $K_{n}$ can be decomposed as follows
\[K_{n}(\lambda,Z)=K_{n-1}(x,Z^{1:2(n-1)})\hat{K}_{n}(\lambda,(x,y,z)).\] 
Regarding the kernel $M_{n}$ we have
\[M_{n}(Z,\tilde{Z})=\big(\sum_{l\in [n,\bar{n}]\setminus \{n,\bar{n}\} }+\sum_{l\in \{n,\bar{n}\}}\big)a_{l}I_{l}(Z,\tilde{Z}).\]
Inserting a letter $n$ or $\bar{n}$ affects only the bottom two levels of the Gelfand-Tsetlin pattern and the second summation equals
\begin{equation*}
\left\{ \begin{array}{ll}
a_{n} r_{1}(z;y) & \text{ if }(\tilde{y},\tilde{z})=(y+e_{1},z+e_{1})\\
a_{n}(1- r_{1}(z;y)) & \text{ if }(\tilde{y},\tilde{z})=(y+e_{1},z+e_{2})\\
a_{n}^{-1} & \text{ if }(\tilde{y},\tilde{z})=(y,z-e_{1})
\end{array} \right. 
\end{equation*}
which is equal to 
\[\hat{M}_{n}((x,y,z),(\tilde{x},\tilde{y},\tilde{z}))\mathbbm{1}_{\tilde{x}=x}.\]

Let us now denote by $\mathbb{P}_{x\to \tilde{x}}\big((\tilde{y},\tilde{z})|(y,z)\big)$ the probability that the bottom two levels of the Gelfand-Tsetlin pattern performed the transition $(y,z)\to (\tilde{y},\tilde{z})$ as a result of the transition of the particles at level $2n-2$ from $x$ to $ \tilde{x}$. It is easy to see that if $\tilde{x}\neq x$ then
\begin{equation*}
\mathbb{P}_{x\to \tilde{x}}\big((\tilde{y},\tilde{z})|(y,z)\big)= \dfrac{\hat{M}_{n}((x,y,z),(\tilde{x},\tilde{y},\tilde{z}))}{L_{n-1}(x,\tilde{x})}.
\end{equation*}
Therefore, if $l \in [n,\bar{n}]\setminus \{n,\bar{n}\}$ then $I_{l}(Z,\bar{Z})$ can be decomposed as follows
\[I_{l}(Z,\bar{Z})=I_{l}(Z^{1:2(n-1)},\tilde{Z}^{1:2(n-1)})\dfrac{\hat{M}_{n}((x,y,z),(\tilde{x},\tilde{y},\tilde{z}))}{L_{n-1}(x,\tilde{x})}\]
and the first summation equals
\begin{equation*}
\sum_{l \in [n,\bar{n}]\setminus \{n,\bar{n}\}}a_{l}I_{l}(Z,\tilde{Z})=M_{n-1}(Z^{1:2(n-1)},\tilde{Z}^{1:2(n-1)}) \dfrac{\hat{M}_{n}((x,y,z),(\tilde{x},\tilde{y},\tilde{z}))}{L_{n-1}(x,\tilde{x})}\mathbbm{1}_{\tilde{x}\neq x}.
\end{equation*}\\
For $\lambda \in \Lambda_{n}$ and $\tilde{Z}\in \mathbb{K}^{2n}$ we have
\[(K_{n}M_{n})(\lambda, \tilde{Z})= \big(\sum_{\substack{Z \in \mathbb{K}^{2n}:\\ x=\tilde{x}}} + \sum_{\substack{Z \in \mathbb{K}^{2n}:\\ x\neq\tilde{x}}} \big)K_{n}(\lambda,Z)M_{n}(Z,\tilde{Z}):= I_{1}+I_{2}.\]
If $x=\tilde{x}$ then this implies that $Z^{1:2(n-1)}=\tilde{Z}^{1:2(n-1)}$ therefore the first summation equals
\begin{equation*}
I_{1}=\sum_{\substack{(x,y,z)\in T_{n}:\\ x=\tilde{x}}}K_{n-1}(\tilde{x},\tilde{Z}^{1:2(n-1)})\hat{K}_{n}(\lambda,(x,y,z)) \hat{M}_{n}((x,y,z),(\tilde{x},\tilde{y},\tilde{z})).
\end{equation*}
For the second summation we have
\begin{equation*}
\begin{split}
I_{2}&=\sum_{\substack{Z \in \mathbb{K}^{2n}:\\ x\neq \tilde{x}}}\dfrac{\hat{M}_{n}((x,y,z),(\tilde{x},\tilde{y},\tilde{z}))}{L_{n-1}(x,\tilde{x})}\hat{K}_{n}(\lambda,(x,y,z))\\
& \hspace{80pt} \times K_{n-1}(x,Z^{1:2(n-1)}) M_{n-1}(Z^{1:2(n-1)},\tilde{Z}^{1:2(n-1)})\\
& \\
&= \sum_{\substack{(x,y,z)\in T_{n}:\\ x\neq \tilde{x}}}\dfrac{\hat{M}_{n}((x,y,z),(\tilde{x},\tilde{y},\tilde{z}))}{L_{n-1}(x,\tilde{x})}\hat{K}_{n}(\lambda,(x,y,z))\\
& \hspace{80pt} \times (K_{n-1}M_{n-1})(x,\tilde{Z}^{1:2(n-1)})
\end{split}
\end{equation*}
using the induction hypothesis we then conclude that
\begin{equation*}
I_{2}=\sum_{\substack{(x,y,z)\in T_{n}:\\ x\neq \tilde{x}}}K_{n-1}(\tilde{x},\tilde{Z}^{1:2(n-1)})\hat{K}_{n}(\lambda,(x,y,z)) \hat{M}_{n}((x,y,z),(\tilde{x},\tilde{y},\tilde{z})).
\end{equation*}
Combining the two sums $I_{1}$ and $I_{2}$ leads to the following
\begin{equation*}
(K_{n}M_{n})(\lambda, \tilde{Z})=K_{n-1}(\tilde{x},\tilde{Z}^{1:2(n-1)})(\hat{K}_{n}\hat{M}_{n})(\lambda,(\tilde{x},\tilde{y},\tilde{z})).
\end{equation*}
Finally, using Proposition \ref{intertwining_helper} we conclude that
\begin{equation*}
\begin{split}
(K_{n}M_{n})(\lambda, \tilde{Z})&=K_{n-1}(\tilde{x},\tilde{Z}^{1:2(n-1)})(L_{n}\hat{K}_{n})(\lambda,(\tilde{x},\tilde{y},\tilde{z}))\\
&=\sum_{\mu \in \Lambda_{n}} L_{n}(\lambda,\mu)\hat{K}_n(\mu,(\tilde{x},\tilde{y},\tilde{z}))K_{n-1}(\tilde{x},\tilde{Z}^{1:2(n-1)})\\
&=\sum_{\mu \in \Lambda_{n}} L_{n}(\lambda,\mu)K_{n}(\mu, \tilde{Z})\\
&= (L_{n}K_{n})(\lambda, \tilde{Z})
\end{split}
\end{equation*}
as required.

\section*{Acknowledgements} The author would like to thank Nikos Zygouras and Jon Warren for their guidance. Thanks are also due to Bruce Westbury for mentioning the Berele insertion algorithm which motivated the work of this paper. This material is part of the author's PhD thesis, supported by departmental bursary from the Department of Statistics of University of Warwick.
\bibliographystyle{eg-alpha}
\bibliography{refs}

\end{document}